 \renewcommand{\a}{\alpha}
\renewcommand{\b}{\beta}
\newcommand{\g}{\gamma}
\newcommand{\G}{\Gamma}
\renewcommand{\l}{\lambda}
\renewcommand{\(}{\left\(}
\renewcommand{\)}{\right\)}
\renewcommand{\[}{\left\[}
\renewcommand{\]}{\right\]}
\numberwithin{equation}{section}
 \theoremstyle{plain}
\newtheorem{theorem}{Theorem}[section]
\newtheorem{lemma}[theorem]{Lemma}
\newtheorem{remark}[]{Remark}
\newtheorem{corollary}[theorem]{Corollary}
\def\proof{\@ifnextchar[{\@oproof}{\@nproof}}
\def\@oproof[#1][#2]{\trivlist\item[\hskip\labelsep\textit{#2 Proof of\
#1.}~]\ignorespaces}
\def\@nproof{\trivlist\item[\hskip\labelsep\textit{Proof.}~]\ignorespaces}
\begin{document}

\title[Generalized Lambert series and arithmetic nature of odd zeta values]{Generalized Lambert series and arithmetic nature of odd zeta values} 

\author{Atul Dixit and Bibekananda Maji}\thanks{2010 \textit{Mathematics Subject Classification.} Primary 11M06; Secondary 11J81.\\
\textit{Keywords and phrases.} Lambert series, Dedekind eta function, odd zeta values, Ramanujan's formula, transcendence}
\address{Discipline of Mathematics, Indian Institute of Technology Gandhinagar, Palaj, Gandhinagar 382355, Gujarat, India} 
\email{adixit@iitgn.ac.in, bibekananda.maji@iitgn.ac.in}


\begin{abstract}
It is pointed out that the generalized Lambert series $\displaystyle\sum_{n=1}^{\infty}\frac{n^{N-2h}}{e^{n^{N}x}-1}$ studied by Kanemitsu, Tanigawa and Yoshimoto can be found on page $332$ of Ramanujan's Lost Notebook in a slightly more general form. We extend an important transformation of this series obtained by Kanemitsu, Tanigawa and Yoshimoto by removing restrictions on the parameters $N$ and $h$ that they impose. From our extension we deduce a beautiful new generalization of Ramanujan's famous formula for odd zeta values which, for $N$ odd and $m>0$, gives a relation between $\zeta(2m+1)$ and $\zeta(2Nm+1)$. A result complementary to the aforementioned generalization is obtained for any even $N$ and $m\in\mathbb{Z}$. It generalizes a transformation of Wigert and can be regarded as a formula for $\zeta\left(2m+1-\frac{1}{N}\right)$. Applications of these transformations include a generalization of the transformation for the logarithm of Dedekind eta-function $\eta(z)$, Zudilin- and Rivoal-type results on transcendence of certain values, and a transcendence criterion for Euler's constant $\gamma$. 

\end{abstract}
\maketitle
\section{Introduction}\label{intro}
Surprises from Ramanujan's Lost Notebook \cite{lnb} seem to continue unabated. We recently came across a new instance of this phenomenon while reading page $332$ of the Lost Notebook, which we describe soon. Page $332$ is mainly devoted to two versions of a beautiful formula for $\zeta\left(\frac{1}{2}\right)$, one of which is also given in Entry 8 of Chapter $15$ of Ramanujan's second notebook \cite{ramnote}, \cite[p.~314]{bcbramsecnote}, and is as follows.

\textit{Let $\a$ and $\b$ be two positive numbers such that $\a\b=4\pi^3$. Then
\begin{align}\label{ramzetahalf}
\sum_{n=1}^{\infty}\frac{1}{e^{n^2\a}-1}=\frac{\pi^2}{6\a}+\frac{1}{4}+\frac{\sqrt{\b}}{4\pi}\left\{\zeta\left(\frac{1}{2}\right)+\sum_{n=1}^{\infty}\frac{\cos(\sqrt{n\b})-\sin(\sqrt{n\b})-e^{-\sqrt{n\b}}}{\sqrt{n}\left(\cosh(\sqrt{n\b})-\cos(\sqrt{n\b})\right)}\right\}.
\end{align}}
As mentioned in \cite[p.~191]{bcbramlost4}, one can conceive this result as an identity for an infinite sum of theta functions. Katsurada \cite{katsurada} interpreted the infinite series on the right as the exact form of the error term in the asymptotic formula
\begin{equation*}
\sum_{n=1}^{\infty}\frac{1}{e^{n^2\a}-1}=\frac{\pi^2}{6\a}+\frac{1}{4}+\frac{\sqrt{\b}}{4\pi}\zeta\left(\frac{1}{2}\right)+o(1)
\end{equation*}
as $\a\to 0^{+}$. The result \eqref{ramzetahalf} has instigated a flurry of activities over the past two decades with several mathematicians obtaining generalizations, including analogues for the Hurwitz zeta function and Dirichlet $L$-functions. A complete account of these activities is given in \cite[p.~191-193]{bcbramlost4} and while we avoid duplicating all references given there, we do mention those which are relevant to the material in the sequel. 

Ramanujan's second reformulation of \eqref{ramzetahalf} in \cite[p.~332]{lnb} appears in a footnote on page $9$ of a paper of Wigert \cite{wig} who also generalized it \cite[p.~8-9, Equation (5)]{wig} by deriving a formula for $\zeta\left(\frac{1}{N}\right)$ for any even positive integer $N$ without knowledge that the case $N=2$ had been considered by Ramanujan. This alternative reformulation of the formula for $\zeta\left(\frac{1}{2}\right)$ has been recently generalized in a different direction in \cite[p.~859, Theorem 10.1]{bdrz1}. Wigert's formula for $N$ even and $x$ positive\footnote{Wigert actually obtains the transformation for complex $x$ with Re$(x)>0$, however, this can be easily seen to be true by analytic continuation.} reads
\begin{align}\label{wigertN}
\sum_{n=1}^{\infty}\frac{1}{e^{n^{N}x}-1}&=\frac{\zeta(N)}{x}+x^{-\frac{1}{N}}\G\left(1+\frac{1}{N}\right)\zeta\left(\frac{1}{N}\right)+\frac{1}{4}\nonumber\\
&\quad+\frac{(-1)^{\frac{N}{2}-1}}{N}\left(\frac{2\pi}{x}\right)^{\frac{1}{N}}\sum_{j=0}^{\frac{N}{2}-1}\bigg\{e^{\frac{i\pi(2j+1)(N-1)}{2N}}\overline{L}_{N}\bigg(2\pi\left(\frac{2\pi}{x}\right)^{\frac{1}{N}}e^{-\frac{(2j+1)\pi i}{2N}}\bigg)\nonumber\\
&\qquad\qquad\qquad\qquad\qquad+e^{-\frac{i\pi(2j+1)(N-1)}{2N}}\overline{L}_{N}\bigg(2\pi\left(\frac{2\pi}{x}\right)^{\frac{1}{N}}e^{\frac{(2j+1)\pi i}{2N}}\bigg)\bigg\},
\end{align}
where $\overline{L}_{N}(x):=\sum_{n=1}^{\infty}\frac{n^{\frac{1}{N}-1}}{\text{exp}(n^{\frac{1}{N}}x)-1}.$

Kanemitsu, Tanigawa and Yoshimoto systematically studied various analogues and generalizations of \eqref{ramzetahalf} which give values of the Riemann zeta function \cite[Section 3]{ktyssfa}, Hurwitz zeta function \cite[Theorem 2.1]{ktyacta}, and also Dirichlet $L$-functions \cite[p.~32-33]{ktypic} at rational arguments with even denominator and odd numerator. The culmination of their results occurred in \cite[Theorem 1]{ktyhr} when they obtained the following beautiful result for values of the Riemann zeta function at positive rational arguments irrespective of the parity of the numerator and the denominator.

\textit{Let $h$ and $N$ be fixed natural numbers with $h\leq N/2$. Let 
\begin{equation}\label{abA}
a_{j,N}=\cos\left(\frac{\pi j}{2N}\right), b_{j,N}=\sin\left(\frac{\pi j}{2N}\right), A_{N}(y)=\pi\left(2\pi y\right)^{1/N}.
\end{equation}
Also, let
\begin{equation}\label{f0}
f_{0}(x;n,N)=\frac{e^{-A_{N}\left(\frac{n}{x}\right)}}{2\sinh\left(A_{N}\left(\frac{n}{x}\right)\right)}
\end{equation}
and for $j\geq 1$,
\begin{align}\label{f1}
f_{j}(x;n,N,h)=\frac{\cos\left(2A_{N}\left(\frac{n}{x}\right)b_{j,N}+\frac{\pi (2h-1)j}{2N}\right)-e^{-2A_{N}\left(\frac{n}{x}\right)a_{j,N}}\cos\left(\frac{\pi (2h-1)j}{2N}\right)}{\cosh\left(2A_{N}\left(\frac{n}{x}\right)a_{j,N}\right)-\cos\left(2A_{N}\left(\frac{n}{x}\right)b_{j,N}\right)}.
\end{align}
Then for $x>0$,
\begin{equation}\label{ramseries}
\sum_{n=1}^{\infty}\frac{n^{N-2h}}{e^{n^{N}x}-1}=P(x)+S(x),
\end{equation}
where
\begin{align}\label{pxfir}
P(x)=P(x;N,h)&=-\frac{1}{2}\zeta(-N+2h)+\frac{\zeta(2h)}{x}\nonumber\\
&\quad+\frac{1}{N}\G\left(\frac{N-2h+1}{N}\right)\zeta\left(\frac{N-2h+1}{N}\right)x^{-\frac{(N-2h+1)}{N}},
\end{align}
and\footnote{There is a minor typo in the statement of this theorem in \cite{ktyhr} in that the variable of summation in both the series representations for $S(x)$ there begins from $n=0$. See \eqref{sodd} and \eqref{seven} here for the corrected expressions.}
\begin{align}\label{sodd}
S(x)=S(x;N,h)=\frac{(-1)^{h+1}}{N}\left(\frac{2\pi}{x}\right)^{\frac{N-2h+1}{N}}\sum_{n=1}^{\infty}\frac{1}{n^{\frac{2h-1}{N}}}\bigg\{f_{0}(x;n,N)+\sum_{j=1}^{\frac{N-1}{2}}f_{2j}(x;n,N,h)\bigg\}
\end{align}
for $N$ odd, and 
\begin{align}\label{seven}
S(x)=S(x;N,h)=\frac{(-1)^{h+1}}{N}\left(\frac{2\pi}{x}\right)^{\frac{N-2h+1}{N}}\sum_{n=1}^{\infty}\frac{1}{n^{\frac{2h-1}{N}}}\sum_{j=1}^{\frac{N}{2}}f_{2j-1}(x;n,N,h)
\end{align}
for $N$ even.}
Note that Ramanujan's formula \eqref{ramzetahalf} is the special case $h=1, N=2$ of the above result.

Now the surprising thing that we came across while going over page $332$ of the Lost Notebook \cite{lnb} is that at the end of this page Ramanujan starts writing the \emph{exact} same series considered by Kanemitsu, Tanigawa and Yoshimoto in their above result, that is, the series on the left side of \eqref{ramseries}, but with more general conditions on the associated parameters subsuming the ones given by Kanemitsu et al. The precise sentence at the end of this page, in Ramanujan's own words, reads
\textit{\begin{equation}\label{ramexseries}
\frac{1^{r}}{e^{1^sx}-1}+\frac{2^{r}}{e^{2^sx}-1}+\frac{3^{r}}{e^{3^sx}-1}+\cdots
\end{equation}
where $s$ is a positive integer and $r-s$ is any even integer.}

Note that Ramanujan is taking $r$ and $s$ such that $r-s$ is \emph{any} even integer, where as for the series on the left side of \eqref{ramseries}, Kanemitsu et al., in Ramanujan's notation, take $r-s$ to be a \emph{negative} even integer only.

Ramanujan does not give any expression for this series like the one in \eqref{ramseries}, so it is not clear what he had in his mind. Was there a page after page $332$ in the Lost Notebook that went missing? While this question may remain unanswered forever and while some may call our speculation as wishful thinking, from the fact that the left-hand side of \eqref{ramzetahalf} is merely a special case of \eqref{ramexseries}, it is clear that Ramanujan had recognized the importance of the latter. Perhaps if he had completed his formula, he would have ended up with an expression consisting of the Riemann zeta function at rational arguments. 

Even though Ramanujan did not give any expression for the series in \eqref{ramexseries}, he is right in making the assumption that $r-s$ is any even integer, for, \eqref{ramseries} holds not only for $0<h\leq N/2$ but also when $h$ is \emph{any} integer. This, in Ramanujan's notation, is equivalent to saying $r-s$ can be any even integer. This observation has two fruitful consequences that went unnoticed in the paper \cite{ktyhr} of Kanemitsu et al. 

Indeed, if we allow $h$ to be any negative integer, then the case $N=1, h=1-m$, where $m>1$ is a natural number, of \eqref{ramseries} gives an interesting result of Ramanujan \cite[vol. 1, p.~259, no. 14]{ramnote}, \cite[p.~269]{trigsums}, \cite[p.~190]{ramcollected}, namely, for $\a,\b>0$ with $\a\b=\pi^2$, 
\begin{align}\label{ramzetaspl}
\a^{m}\sum_{n=1}^{\infty}\frac{n^{2m-1}}{e^{2\a n}-1}-(-\b)^m\sum_{n=1}^{\infty}\frac{n^{2m-1}}{e^{2\b n}-1}=\left(\a^m-(-\b)^m\right)\frac{B_{2m}}{4m},
\end{align}
where $B_{m}$ is the $m^{\textup{th}}$ Bernoulli number defined by
\begin{equation*}
\frac{x}{e^{x}-1}=\sum_{n=1}^{\infty}\frac{B_nx^n}{n!},\hspace{5mm}(|x|<2\pi),
\end{equation*}
whereas the case $N=1, h=0$ of \eqref{ramseries} gives another result of Ramanujan \cite[Ch. 14, Sec. 8, Cor. (i)]{ramnote}, \cite[p.~318, formula (23)]{lnb}, namely, for $\a,\b>0$ such that $\a\b=\pi^2$,
\begin{equation}\label{ramzetaspl0}
\a\sum_{n=1}^{\infty}\frac{n}{e^{2n\a}-1}+\b\sum_{n=1}^{\infty}\frac{n}{e^{2n\b}-1}=\frac{\a+\b}{24}-\frac{1}{4}.
\end{equation}
However, \eqref{ramzetaspl} and \eqref{ramzetaspl0} are but special cases of Ramanujan's following famous formula for $\zeta(2m+1)$ \cite[p.~173, Ch. 14, Entry 21(i)]{ramnote}, \cite[p.~319-320, formula (28)]{lnb}, \cite[p.~275-276]{bcbramsecnote}, when $m$ is a negative integer less than $-1$ and $m=-1$ respectively.

\textit{For $\a, \b>0$ with $\a\b=\pi^2$ and $m\in\mathbb{Z}, m\neq 0$,
\begin{align}\label{zetaodd}
\a^{-m}\left\{\frac{1}{2}\zeta(2m+1)+\sum_{n=1}^{\infty}\frac{n^{-2m-1}}{e^{2\a n}-1}\right\}&=(-\b)^{-m}\left\{\frac{1}{2}\zeta(2m+1)+\sum_{n=1}^{\infty}\frac{n^{-2m-1}}{e^{2\b n}-1}\right\}\nonumber\\
&-2^{2m}\sum_{j=0}^{m+1}\frac{(-1)^jB_{2j}B_{2m+2-2j}}{(2j)!(2m+2-2j)!}\a^{m+1-j}\b^j.
\end{align}}
As a special case, this formula gives the following result of Lerch \cite{lerch} for $m$ odd.
\begin{align}\label{lerch}
\zeta(2m+1)+2\sum_{n=1}^{\infty}\frac{1}{n^{2m+1}(e^{2\pi n}-1)}=\pi^{2m+1}2^{2m}\sum_{j=0}^{m+1}\frac{(-1)^{j+1}B_{2j}B_{2m+2-2j}}{(2j)!(2m+2-2j)!}.
\end{align} 
For the history, discussion and reference to works on Ramanujan's beautiful formula \eqref{zetaodd}, we refer the reader to \cite{berndtrocky} and to the more recent paper \cite{berndtstraubzeta}. A modern interpretation of this formula is that it encodes fundamental transformation properties of Eisenstein series on $\textup{SL}_{2}(\mathbb{Z})$ and their Eichler integrals \cite{gmr}. This interpretation has been extended in \cite[Section 5]{berndtstraubmathz} to weight $2k+1$ Eisenstein series of level $2$. Kirschenhofer and Prodinger \cite{kirprod} have found applications of Ramanujan's formula in the analysis of special data structures and algorithms. More specifically, they use these identities to achieve certain distribution results on random variables related to dynamic data structures called `tries', which are of importance in theoretical computer science.

In view of \eqref{ramzetaspl} and \eqref{ramzetaspl0} both resulting from \eqref{ramseries}, the natural question that arises now is - does \eqref{ramseries} also give \eqref{zetaodd} for the remaining integer values of $m$? The answer is no. This motivated us to look for an extension of \eqref{ramseries} for other values of $h$ that may answer the above question affirmatively. Our extension of \eqref{ramseries} for $N-2h\neq-1$ is given below. The remaining case when $N-2h=-1$ is dealt with separately in Theorem \ref{zetagenm0}.
\begin{theorem}\label{ktycomp}
Let $N\in\mathbb{N}$ and $h$ be any integer. Let $a_{j,N}, b_{j,N}$ and $A_{N}(y)$ be defined in \eqref{abA}, $f_{0}(x;n,N), f_{j}(x;n,N,h), j\geq 1$, be defined in \eqref{f0} and \eqref{f1}. Let $x>0$ and let $P(x)$ be defined in \eqref{pxfir} and $S(x)$ in \eqref{sodd} and \eqref{seven}. Then for $N-2h\neq-1$,
\begin{equation*}
\sum_{n=1}^{\infty}\frac{n^{N-2h}}{e^{n^{N}x}-1}=P_{1}(x)+S(x),
\end{equation*}
where
\begin{align}\label{p1x}
P_{1}(x)=P_{1}(x;N,h)=P(x)+(-1)^{h+1}2^{2h-1}\pi^{2h}\sum_{j=1}^{\left\lfloor\frac{h}{N}\right\rfloor}\left(\frac{-1}{4\pi^2}\right)^{jN}\frac{B_{2j}B_{2h-2jN}x^{2j-1}}{(2j)!(2h-2jN)!}.
\end{align}
\end{theorem}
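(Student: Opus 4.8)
The plan is to derive the identity from the Mellin--Barnes representation of the left-hand side and to read off both $P_1(x)$ and $S(x)$ from a single contour shift, following the method of Kanemitsu, Tanigawa and Yoshimoto but now tracking the poles that they could afford to ignore. Writing $F(x)=\sum_{n=1}^{\infty}n^{N-2h}/(e^{n^{N}x}-1)$ and using the standard integral $1/(e^{y}-1)=\frac{1}{2\pi i}\int_{(c)}\Gamma(s)\zeta(s)y^{-s}\,ds$ with $y=n^{N}x$, I would interchange summation and integration (valid on $\mathrm{Re}(s)=c$ large) to obtain
\begin{equation*}
F(x)=\frac{1}{2\pi i}\int_{(c)}\Gamma(s)\zeta(s)\zeta(Ns+2h-N)x^{-s}\,ds,\qquad c>\max\left(1,\tfrac{N-2h+1}{N}\right),
\end{equation*}
since $\sum_{n\ge1}n^{N-2h-Ns}=\zeta(Ns+2h-N)$. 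This representation is valid for \emph{every} integer $h$, which is exactly what frees us from the restriction $h\le N/2$.

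Next I would push the line of integration to $\mathrm{Re}(s)=\lambda$ with $\lambda<0$ large in absolute value and collect residues. The poles at $s=1$, $s=(N-2h+1)/N$ and $s=0$ produce precisely the three terms of $P(x)$ in \eqref{pxfir}; here the hypothesis $N-2h\neq-1$ is what keeps the pole at $s=0$ simple, for otherwise $\zeta(2h-N)=\zeta(1)$ would diverge and $s=0$ would carry a double pole (this is the situation reserved for Theorem \ref{zetagenm0}). The decisive new contribution comes from the poles of $\Gamma(s)$ at negative integers. At $s=-2j$ the factor $\zeta(-2j)=0$ annihilates the residue, while at $s=-(2j-1)$ one finds, using $\zeta(1-2j)=-B_{2j}/(2j)$, the residue $\frac{B_{2j}}{(2j)!}\zeta(2h-2jN)\,x^{2j-1}$. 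In the KTY range $h\le N/2$ the argument $2h-2jN$ is a negative even integer for every $j\ge1$, so $\zeta(2h-2jN)=0$ and these residues are invisible; but for a general $h$ one has $2h-2jN\ge0$ exactly when $1\le j\le\lfloor h/N\rfloor$, and then $\zeta(2h-2jN)\neq0$. Substituting $\zeta(2m)=(-1)^{m+1}(2\pi)^{2m}B_{2m}/(2\,(2m)!)$ (with $\zeta(0)=-\tfrac12$ at the endpoint $j=h/N$ when $N\mid h$) and simplifying the powers of $2$ and $\pi$ shows that the sum of these residues is exactly the finite sum appended in $P_1(x)$ in \eqref{p1x}.

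Finally, the leftover integral on $\mathrm{Re}(s)=\lambda$ is transformed, via the functional equation of $\zeta(s)$ and re-expansion as a Dirichlet series, into the trigonometric/hyperbolic series $S(x)$; this step is formally identical to the one in \cite{ktyhr} and is unaffected by the value of $h$, so it reproduces \eqref{sodd} and \eqref{seven} verbatim. The main obstacle is analytic rather than algebraic: one must supply the growth estimates on $\Gamma(s)\zeta(s)\zeta(Ns+2h-N)$ that force the horizontal segments to vanish and guarantee convergence of both the shifted integral and the resulting series for arbitrarily large $|h|$. One must also treat with care the borderline configurations in which the pole at $s=(N-2h+1)/N$ collides with a pole of $\Gamma(s)$ (which can occur for certain odd $N$ with $N\mid(2h-1)$); there the factors $\Gamma\bigl((N-2h+1)/N\bigr)$ and $\zeta\bigl((N-2h+1)/N\bigr)$ in the third term of $P(x)$ degenerate into a pole times a zero and must be read as a limit, and verifying that the stated identity survives this limiting process is the most delicate part of the bookkeeping.
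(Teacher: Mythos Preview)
Your proposal is correct and follows essentially the same approach as the paper: Mellin--Barnes representation, contour shift to the left, residue collection (now including the surviving poles of $\Gamma$ at $s=-(2j-1)$ for $1\le j\le\lfloor h/N\rfloor$), and conversion of the remaining integral into $S(x)$ via the functional equation. One small caveat: the paper stresses that the final step is \emph{not} literally unaffected by $h$---one must choose the shifted abscissa $-c$ with $c>\tfrac{2h}{N}-1$ (not merely $c>2\lfloor h/N\rfloor-1$) so that after the change of variable both factors $\zeta(s_1)$ and $\zeta\bigl((s_1+2h-1)/N\bigr)$ can be expanded as absolutely convergent Dirichlet series; your phrase ``$\lambda<0$ large in absolute value'' covers this, but your remark that the step is ``unaffected by the value of $h$'' slightly understates the point.
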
 
We note here that Kanemitsu, Tanigawa and Yoshimoto \cite[Theorem 2.1]{ktyacta} have obtained the above extension but only in the case when $N$ is even and $h\geq N/2$. They do not obtain the above extension when $N$ is odd. Nor do they obtain the result for $N$ even and $h<0$. But for $N$ even and $h\geq N/2$, they obtain a result not only for the Riemann zeta function but also for the multiple Hurwitz zeta function $\zeta_{k}(s,a)$. Later \cite[p.~32]{ktypic}, they also obtain a character analogue of this result, but again for $N$ even and $h\geq N/2$. (We note that the results in \cite[Theorem 2.1]{ktyacta} and \cite[p.~32]{ktypic} contain two more variables $\ell$ and $a$ apart from $h$ and $N$, however, for $a=1$, which is what corresponds to $\zeta(s)$, we can rephrase the conditions for the validity of their results in the form we have given, namely, $N$ even and $h\geq N/2$.) Katsurada \cite{katsurada} also obtains similar results but which do not contain any power of $n$.

Our extension in Theorem \ref{ktycomp}, on the other hand, allows us to have no restrictions on $h$ and $N$ except that $N$ be any natural number and $h$ be any integer. As we show in this paper, our Theorem \ref{ktycomp} for $N$ odd, and $h\leq 0$ or $h>N/2$, not only gives Ramanujan's famous formula \eqref{zetaodd} as a special case but also its new elegant generalization stated in Theorem \ref{zetagen} below. An important thing about this generalization is that it gives a relation between $\zeta(2m+1)$ and $\zeta(2Nm+1)$ for \emph{any} odd positive integer $N$ and \emph{any} non-zero integer $m$. Such a relation between these odd zeta values has been missing from the literature. 
\begin{theorem}\label{zetagen}
Let $N$ be an odd positive integer and $\a,\b>0$ such that $\a\b^{N}=\pi^{N+1}$. Then for any non-zero integer $m$,
{\allowdisplaybreaks\begin{align}\label{zetageneqn}
&\a^{-\frac{2Nm}{N+1}}\left(\frac{1}{2}\zeta(2Nm+1)+\sum_{n=1}^{\infty}\frac{n^{-2Nm-1}}{\textup{exp}\left((2n)^{N}\a\right)-1}\right)\nonumber\\
&=\left(-\b^{\frac{2N}{N+1}}\right)^{-m}\frac{2^{2m(N-1)}}{N}\Bigg(\frac{1}{2}\zeta(2m+1)+(-1)^{\frac{N+3}{2}}\sum_{j=\frac{-(N-1)}{2}}^{\frac{N-1}{2}}(-1)^{j}\sum_{n=1}^{\infty}\frac{n^{-2m-1}}{\textup{exp}\left((2n)^{\frac{1}{N}}\b e^{\frac{i\pi j}{N}}\right)-1}\Bigg)\nonumber\\
&\quad+(-1)^{m+\frac{N+3}{2}}2^{2Nm}\sum_{j=0}^{\left\lfloor\frac{N+1}{2N}+m\right\rfloor}\frac{(-1)^jB_{2j}B_{N+1+2N(m-j)}}{(2j)!(N+1+2N(m-j))!}\a^{\frac{2j}{N+1}}\b^{N+\frac{2N^2(m-j)}{N+1}}.
\end{align}}
\end{theorem}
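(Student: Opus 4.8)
The plan is to specialize Theorem \ref{ktycomp} to a carefully chosen pair $(x,h)$ and then repackage its three pieces, namely $P(x)$ from \eqref{pxfir}, the correction term in \eqref{p1x}, and $S(x)$ from \eqref{sodd}, into the three groups of terms on the right of \eqref{zetageneqn}. Since $N$ is odd, the quantity $h=\frac{N+1}{2}+Nm$ is an integer, and for $m\neq 0$ we have $N-2h=-2Nm-1\neq -1$, so Theorem \ref{ktycomp} applies. I would take $x=2^{N}\alpha$, so that the series on the left of Theorem \ref{ktycomp} becomes $\sum_{n=1}^{\infty}n^{-2Nm-1}/(\exp((2n)^{N}\alpha)-1)$, exactly the series multiplying $\alpha^{-2Nm/(N+1)}$ in \eqref{zetageneqn}. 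With this choice the hypothesis $\alpha\beta^{N}=\pi^{N+1}$ forces the clean relation $2A_{N}(n/x)=(2n)^{1/N}\beta$, which is what turns the arguments inside $S(x)$ into $(2n)^{1/N}\beta e^{\pm i\pi j/N}$.

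Next I would sort out the rational and closed-form constants. In $P(x)$ the value $-\tfrac12\zeta(-N+2h)=-\tfrac12\zeta(2Nm+1)$ cancels against the $\tfrac12\zeta(2Nm+1)$ inside the parentheses on the left of \eqref{zetageneqn}. The remaining $\zeta(2h)/x$ has $2h=N+1+2Nm$, a positive even integer, so $\zeta(2h)$ is a rational multiple of $\pi^{2h}B_{2h}$; a direct check shows it is precisely the $j=0$ term of the correction sum in \eqref{p1x}, so $\zeta(2h)/x$ and that correction sum merge into one sum running from $j=0$ to $\lfloor h/N\rfloor=\lfloor\frac{N+1}{2N}+m\rfloor$. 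Converting the powers of $x=2^{N}\alpha$, of $\pi$, and of $2$ by means of $\alpha\beta^{N}=\pi^{N+1}$, and using $(-1)^{h+1}=(-1)^{m+(N+3)/2}$ (valid because $N$ is odd, so $Nm\equiv m\pmod 2$), this combined sum becomes the Bernoulli double sum on the last line of \eqref{zetageneqn}.

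The main analytic input is the term $\frac1N\Gamma\big(\frac{N-2h+1}{N}\big)\zeta\big(\frac{N-2h+1}{N}\big)x^{-(N-2h+1)/N}$, whose argument equals $-2m$. For $m<0$ this is a genuine value, but for $m>0$ it is of the indeterminate form $0\cdot\infty$, a trivial zero of $\zeta$ against a pole of $\Gamma$, and must be read as a limit. In either case the functional equation in the form $\zeta(2m+1)=2(2\pi)^{2m}(-1)^{m}\Gamma(-2m)\zeta(-2m)$ gives $\Gamma(-2m)\zeta(-2m)=(-1)^{m}\zeta(2m+1)/(2(2\pi)^{2m})$, and substituting $x=2^{N}\alpha$ together with $\alpha\beta^{N}=\pi^{N+1}$ turns this term, after multiplication by $\alpha^{-2Nm/(N+1)}$, into exactly $\big(-\beta^{2N/(N+1)}\big)^{-m}\frac{2^{2m(N-1)}}{N}\cdot\tfrac12\zeta(2m+1)$. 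I expect this limiting/functional-equation step to be the most delicate point of the argument.

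Finally I would treat $S(x)$. Writing $w=(2n)^{1/N}\beta e^{i\pi j/N}$, the elementary identity $\frac{1}{e^{w}-1}+\frac{1}{e^{\bar w}-1}=\frac{\cos(2A_N b_{2j})-e^{-2A_N a_{2j}}}{\cosh(2A_N a_{2j})-\cos(2A_N b_{2j})}$, and its version with phase factors $e^{\mp i\pi(2h-1)j/N}$ inserted, reproduces $f_{2j}$ from \eqref{f1}. The crucial simplification is that $2h-1=N+2Nm$, whence those phases both equal $(-1)^{j(2m+1)}=(-1)^{j}$; likewise $f_0$ from \eqref{f0} equals $1/(\exp((2n)^{1/N}\beta)-1)$, the $j=0$ term. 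Hence $f_0+\sum_{j=1}^{(N-1)/2}f_{2j}=\sum_{j=-(N-1)/2}^{(N-1)/2}(-1)^{j}/(\exp((2n)^{1/N}\beta e^{i\pi j/N})-1)$, and summing against $n^{-(2h-1)/N}=n^{-2m-1}$ yields the symmetric $j$-sum in \eqref{zetageneqn}. Collecting the prefactor $\frac{(-1)^{h+1}}{N}(2\pi/x)^{-2m}$ of $S(x)$ and multiplying through by $\alpha^{-2Nm/(N+1)}$, the hypothesis $\alpha\beta^{N}=\pi^{N+1}$ again makes every power of $\alpha$, $\beta$, $\pi$ and $2$ match, completing the proof.
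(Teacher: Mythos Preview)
Your proposal is correct and follows essentially the same route as the paper: specialize Theorem~\ref{ktycomp} with $h=\frac{N+1}{2}+Nm$ and $x=2^{N}\alpha$, absorb $\zeta(2h)/x$ as the $j=0$ term of the Bernoulli sum, convert the $\Gamma(-2m)\zeta(-2m)$ piece into $\zeta(2m+1)$ via the functional equation, and use the elementary identity (the paper's Lemma~\ref{ktywigl}) together with $e^{i\pi(2h-1)j/N}=(-1)^{j}$ to rewrite $S(x)$ as the symmetric $j$-sum. Your explicit remark that for $m>0$ the product $\Gamma(-2m)\zeta(-2m)$ must be read as the limit (a removable singularity of $\Gamma(s)\zeta(s)$) is a point the paper glosses over, but otherwise the arguments are the same.
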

It is easy to see that Ramanujan's formula for $\zeta(2m+1)$, that is \eqref{zetaodd}, is the special case $N=1$ of the above result. Thus Theorem \ref{zetagen} gives an infinite family of Ramanujan-type identities for odd zeta values.

The famous result of Euler, namely, 
\begin{equation}\label{zetaevenint}
\zeta(2 m ) = (-1)^{m +1} \frac{(2\pi)^{2 m}B_{2 m }}{2 (2 m)!}
\end{equation}
implies that for all $m\geq 1$, the even zeta values $\zeta(2m)$ are rational multiples of $\pi^{2m}$, and hence transcendental. However, the arithmetic nature of the odd zeta values $\zeta(2m+1)$ is mysterious. Till date we know, thanks to Ap\'{e}ry \cite{apery1}, \cite{apery2}, that $\zeta(3)$ is irrational. But we do not know whether $\zeta(3)$ is transcendental or not. Moreover, even though it is known, due to Rivoal \cite{rivoal} and Ball and Rivoal \cite{ballrivoal}, that there exist infinitely many odd zeta values $\zeta(2m+1), m\geq 1$, which are irrational, it is not known, as of yet, whether $\zeta(2m+1)$ is transcendental or irrational if $m$ is any specific natural number greater than or equal to $2$. Currently the best result in this area, due to Zudilin \cite{zudilin}, states that at least one of $\zeta(5), \zeta(7), \zeta(9)$ or $\zeta(11)$ is irrational. We note that very recently Han\v{c}l and Kristensen \cite{hanclkristensen} have obtained some criteria for irrationality of odd zeta values and Euler's constant.

As applications of the above theorem, we derive Zudilin- and Rivoal-type results on transcendence of odd zeta values and generalized Lambert series in Section \ref{app}.

We next give a one-parameter generalization of the following well-known transformation formula \cite[Ch.~14, Sec.~8, Cor.~(ii); Ch.~16, Entry 27(iii)]{ramnote}, \cite[p.~256]{bcbramsecnote}, \cite[p.~43]{bcbramthinote}, \cite[p.~320, Formula (29)]{lnb} for the logarithm of Dedekind eta function $\eta(z)$:

For $\a, \b>0$ and $\a\b=\pi^2$,
\begin{align}\label{logdede}
\sum_{n=1}^{\infty}\frac{1}{n(e^{2n\a}-1)}-\sum_{n=1}^{\infty}\frac{1}{n(e^{2n\b}-1)}=\frac{\b-\a}{12}+\frac{1}{4}\log\left(\frac{\a}{\b}\right).
\end{align}
This generalization corresponds to the special case $N-2h=-1$, that is, $h=\frac{N+1}{2}$, of the series $\displaystyle\sum_{n=1}^{\infty}\frac{n^{N-2h}}{e^{n^{N}x}-1}$ and is given below.
\begin{theorem}\label{zetagenm0}
Let $N$ be an odd positive integer and $\a,\b>0$ such that $\a\b^{N}=\pi^{N+1}$. Let $\gamma$ denote Euler's constant. Then
{\allowdisplaybreaks\begin{align}\label{zetagenm0eqn}
&\sum_{n=1}^{\infty}\frac{1}{n\left(\exp{((2n)^{N}\a)}-1\right)}-\frac{(-1)^{\frac{N+3}{2}}}{N}\sum_{j=\frac{-(N-1)}{2}}^{\frac{N-1}{2}}(-1)^{j}\sum_{n=1}^{\infty}\frac{1}{n\left(\textup{exp}\left((2n)^{\frac{1}{N}}\b e^{\frac{i\pi j}{N}}\right)-1\right)}\nonumber\\
&=\frac{(N-1)(\log 2-\gamma)}{2N}+\frac{\log(\a/\b)}{2(N+1)}+(-1)^{\frac{N+3}{2}}\sum_{j=0}^{\left\lfloor\frac{N+1}{2N}\right\rfloor}\frac{(-1)^jB_{2j}B_{N+1-2Nj}}{(2j)!(N+1-2Nj)!}\a^{\frac{2j}{N+1}}\b^{N-\frac{2N^2j}{N+1}}.
\end{align}}
\end{theorem}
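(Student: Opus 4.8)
The plan is to go back to the Mellin--Barnes representation that underlies \eqref{ramseries} and re-run the contour argument in the confluent case $N-2h=-1$, which is exactly the case excluded from Theorem \ref{ktycomp}. Writing $x=2^{N}\alpha$ and $h=\tfrac{N+1}{2}$, so that $n^{N-2h}=n^{-1}$ and $e^{n^{N}x}=e^{(2n)^{N}\alpha}$, I would start from
\begin{equation*}
F(x):=\sum_{n=1}^{\infty}\frac{1}{n\left(e^{n^{N}x}-1\right)}=\frac{1}{2\pi i}\int_{(c)}\Gamma(s)\zeta(s)\zeta(Ns+1)x^{-s}\,ds,\qquad c>1,
\end{equation*}
obtained by inserting $\frac{1}{e^{y}-1}=\sum_{k\geq1}e^{-ky}$, applying the Mellin transform of $e^{-y}$, and summing the two resulting Dirichlet series. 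This is precisely the integral used to establish \eqref{ramseries}, now specialized to $-N+2h=1$; the one new feature is that the pole of $\zeta(Ns+1)$ at $s=0$ collides with the pole of $\Gamma(s)$, producing a \emph{double} pole there. Equivalently, one may regard the statement as the $m\to0$ limit of Theorem \ref{zetagen}, in which the two individually divergent terms $-\tfrac12\zeta(-N+2h)$ and $\tfrac1N\Gamma(\tfrac{N-2h+1}{N})\zeta(\tfrac{N-2h+1}{N})x^{-(N-2h+1)/N}$ of $P(x)$ coalesce.

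Next I would shift the line of integration past $s=0$, collecting residues. The simple pole at $s=1$ coming from $\zeta(s)$ contributes $\zeta(N+1)/x$; since $N+1$ is even, Euler's formula \eqref{zetaevenint} together with $\alpha\beta^{N}=\pi^{N+1}$ converts this into exactly the $j=0$ term of the Bernoulli sum on the right of \eqref{zetagenm0eqn}. For $N\geq3$ the residues at the negative integers $s=-1,-2,\dots$ all vanish, because at each such point one of $\zeta(s)$, $\zeta(Ns+1)$ sits at a trivial zero (here one uses that $N$ is odd); when $N=1$ the single surviving residue at $s=-1$ supplies the extra $j=1$ Bernoulli term and recovers \eqref{logdede}. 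Finally, the integral along the shifted line is transformed by the functional equation of $\zeta$ exactly as in the derivation of $S(x)$ in \eqref{sodd}, producing the dual Lambert series $\frac{(-1)^{(N+3)/2}}{N}\sum_{j}(-1)^{j}\sum_{n}\frac{1}{n(\exp((2n)^{1/N}\beta e^{i\pi j/N})-1)}$, with the roots $e^{i\pi j/N}$ arising from the $N$th-root branch; this part of the computation is insensitive to the degeneracy and can be quoted verbatim from the proof of Theorem \ref{zetagen}.

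The heart of the matter, and the step I expect to be the main obstacle, is the residue at the double pole $s=0$. Using the Laurent expansions
\begin{equation*}
\Gamma(s)=\frac1s-\gamma+\cdots,\quad \zeta(Ns+1)=\frac{1}{Ns}+\gamma+\cdots,\quad \zeta(s)=-\frac12-\frac{\log(2\pi)}{2}s+\cdots,\quad x^{-s}=1-s\log x+\cdots,
\end{equation*}
I would multiply out to obtain $\Gamma(s)\zeta(Ns+1)=\frac{1}{Ns^{2}}+\frac{(N-1)\gamma}{Ns}+O(1)$ and $\zeta(s)x^{-s}=-\frac12+\frac{s}{2}\log\frac{x}{2\pi}+O(s^{2})$, and then read off the coefficient of $1/s$ in the product: the $1/s^{2}$ singularity drops out and the residue equals $\frac{1}{2N}\log\frac{x}{2\pi}-\frac{(N-1)\gamma}{2N}$. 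Substituting $x=2^{N}\alpha$ gives $\log\frac{x}{2\pi}=(N-1)\log2+\log\frac{\alpha}{\pi}$, while $\alpha\beta^{N}=\pi^{N+1}$ yields $\frac{1}{2N}\log\frac{\alpha}{\pi}=\frac{1}{2(N+1)}\log\frac{\alpha}{\beta}$; hence this residue is precisely $\frac{(N-1)(\log2-\gamma)}{2N}+\frac{\log(\alpha/\beta)}{2(N+1)}$, the non-Bernoulli terms on the right of \eqref{zetagenm0eqn}. Assembling the residue at $s=1$, the double-pole residue, and the transformed integral, and moving the dual series to the left-hand side, then gives the theorem. The delicate points are the careful bookkeeping of the $O(1)$ and $O(s)$ contributions so that the $\gamma$, $\log2$ and $\log(\alpha/\beta)$ terms emerge with the correct coefficients, and the justification of the contour shift via Stirling's estimate for $\Gamma$ together with the polynomial growth of $\zeta$ in vertical strips.
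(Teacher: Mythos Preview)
Your proposal is correct and follows essentially the same route as the paper's own proof: start from the Mellin--Barnes integral \eqref{integ1} with $h=\tfrac{N+1}{2}$, shift the contour past $s=0$, pick up the simple pole at $s=1$, the double pole at $s=0$ (whose residue the paper records as $R_{0}=\frac{1}{2N}(\gamma(1-N)-\log(2\pi)+\log x)$, identical to yours), and the extra pole at $s=-1$ only when $N=1$, then handle the remaining integral exactly as in the proof of Theorem~\ref{zetagen}. Your treatment is in fact slightly more detailed than the paper's, which merely sketches the argument and refers back to Theorem~\ref{zetagen} for everything except the double-pole computation.
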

\begin{remark}\label{ng1}
Note that 
\begin{equation*}
\sum_{j=0}^{\left\lfloor\frac{N+1}{2N}\right\rfloor}\frac{(-1)^jB_{2j}B_{N+1-2Nj}}{(2j)!(N+1-2Nj)!}\a^{\frac{2j}{N+1}}\b^{N-\frac{2N^2j}{N+1}}=
\begin{cases}
\frac{\b-\a}{12},\quad\hspace{6mm}\text{if}\hspace{1mm} N=1,\\
\frac{B_{N+1}\b^{N}}{(N+1)!}, \quad\text{if}\hspace{1mm} N>1.
\end{cases}
\end{equation*}
\end{remark}
It is easy to see that when $N=1$, the above theorem reduces to \eqref{logdede}. 

As mentioned before, Kanemitsu, Tanigawa and Yoshimoto \cite[Theorem 2.1]{ktyacta} obtained our extension in Theorem \ref{ktycomp}, but only in the case when $N$ is even and $h\geq N/2$. Our extension in Theorem \ref{ktycomp} not only covers the remaining case, that is, $N$ even and $h\leq 0$ but also the case when $N$ is odd and $h$ is any integer. Also, in Theorem \ref{ktycomp}, if we let $N$ be an even positive integer and if $h=\left(m+\frac{1}{2}\right)N$, where $m$ is \emph{any} integer, we obtain the following result which is complementary to Theorem \ref{zetagen}, and which generalizes Wigert's formula \eqref{wigertN}. 
\begin{theorem}\label{neven}
Let $N$ be an even positive integer and $m$ be \emph{any} integer. For any $\a,\b>0$ satisfying $\a\b^N=\pi^{N+1}$,
{\allowdisplaybreaks\begin{align}\label{neveneqn}
&\a^{-\left(\frac{2Nm-1}{N+1}\right)}\left(\frac{1}{2}\zeta(2Nm)+\sum_{n=1}^{\infty}\frac{n^{-2Nm}}{\exp{\left((2n)^{N}\a\right)}-1}\right)\nonumber\\
&=\b^{-\left(\frac{2Nm-1}{N+1}\right)}\frac{(-1)^m}{N}2^{(N-1)\left(2m-\frac{1}{N}\right)}\Bigg(\frac{\zeta\left(2m+1-\frac{1}{N}\right)}{2\cos\left(\frac{\pi}{2N}\right)}\nonumber\\
&\quad-2(-1)^{\frac{N}{2}}\sum_{j=0}^{\frac{N}{2}-1}(-1)^j \sum_{n=1}^{\infty}\frac{1}{n^{2m+1-\frac{1}{N}}}\textup{Im}\Bigg(\frac{e^{\frac{i\pi(2j+1)}{2N}}}{\exp{\left((2n)^{\frac{1}{N}}\b e^{\frac{i\pi(2j+1)}{2N}}\right)}-1}\Bigg)\Bigg)\nonumber\\
&\quad+(-1)^{\frac{N}{2}+1}2^{2Nm-1}\sum_{j=0}^{m}\frac{B_{2j}B_{(2m+1-2j)N}}{(2j)!((2m+1-2j)N)!}\a^{\frac{2j}{N+1}}\b^{N+\frac{2N^2(m-j)-N}{N+1}}.
\end{align}}
\end{theorem}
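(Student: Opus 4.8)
The plan is to specialize Theorem \ref{ktycomp} to $N$ even and $h = \left(m+\frac{1}{2}\right)N = mN + \frac{N}{2}$, and then to recast each resulting piece into the shape of \eqref{neveneqn}. With this choice $N - 2h = -2Nm$, which is never $-1$ since $N$ is even, so Theorem \ref{ktycomp} applies for every integer $m$; moreover $-N + 2h = 2Nm$, $2h = (2m+1)N$, $\frac{N-2h+1}{N} = \frac{1}{N} - 2m$, and $\frac{2h-1}{N} = 2m + 1 - \frac{1}{N}$. First I would set $x = 2^{N}\alpha$, so that $e^{n^{N}x} = \exp\left((2n)^{N}\alpha\right)$ and the left side of Theorem \ref{ktycomp} becomes the Lambert series $\sum_{n=1}^{\infty} n^{-2Nm}/\left(\exp\left((2n)^{N}\alpha\right) - 1\right)$ of \eqref{neveneqn}. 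The constant term $-\frac{1}{2}\zeta(-N+2h) = -\frac{1}{2}\zeta(2Nm)$ of $P(x)$ is then transposed to the left to build the bracket $\frac{1}{2}\zeta(2Nm) + \sum\cdots$.

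Next I would dispose of the two remaining terms of $P(x)$ analytically. The term $\frac{1}{N}\Gamma\left(\frac{1}{N}-2m\right)\zeta\left(\frac{1}{N}-2m\right)x^{2m-1/N}$ is handled by the functional equation $\zeta(s) = 2^{s}\pi^{s-1}\sin\left(\frac{\pi s}{2}\right)\Gamma(1-s)\zeta(1-s)$ with $s = \frac{1}{N}-2m$: using $\Gamma(s)\Gamma(1-s) = \pi/\sin(\pi s)$ and $\sin(\pi s) = 2\sin\left(\frac{\pi s}{2}\right)\cos\left(\frac{\pi s}{2}\right)$ collapses $\Gamma(s)\zeta(s)$ to $\frac{2^{s}\pi^{s}}{2\cos(\pi s/2)}\zeta(1-s)$, and since $\cos\left(\frac{\pi s}{2}\right) = (-1)^{m}\cos\left(\frac{\pi}{2N}\right)$ and $1 - s = 2m + 1 - \frac{1}{N}$, this yields precisely the factor $\zeta\left(2m+1-\frac{1}{N}\right)\big/\left(2\cos\left(\frac{\pi}{2N}\right)\right)$ up to powers of $2$ and $\pi$. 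The pole term $\zeta(2h)/x = \zeta\left((2m+1)N\right)/x$ involves the Riemann zeta function at the even integer $(2m+1)N$, so Euler's formula \eqref{zetaevenint} turns it into a rational multiple of $\pi^{(2m+1)N}B_{(2m+1)N}/x$; after inserting $x = 2^{N}\alpha$ this is exactly the $j=0$ summand of the Bernoulli sum on the right of \eqref{neveneqn}, and it merges with the correction sum in $P_{1}(x)$ (whose summand now has $2h - 2jN = (2(m-j)+1)N$ and runs over $1 \le j \le \lfloor h/N\rfloor = m$) to give the complete sum from $j=0$ to $j=m$.

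The heart of the argument is to recast the real-valued series $S(x)$ of \eqref{seven} as the imaginary-part expression in \eqref{neveneqn}. With $x = 2^{N}\alpha$ and $\alpha\beta^{N} = \pi^{N+1}$ one checks that $A_{N}(n/x) = \pi(2\pi n/x)^{1/N}$ simplifies to $\frac{\beta}{2}(2n)^{1/N}$, so that $2A_{N}(n/x)a_{2j-1,N}$ and $2A_{N}(n/x)b_{2j-1,N}$ are the real and imaginary parts of $(2n)^{1/N}\beta e^{i\phi_{j}}$ with $\phi_{j} = \frac{\pi(2j-1)}{2N}$. Writing $z = u + iv$ for this complex number, a direct computation gives $\textup{Im}\left(e^{i\phi_{j}}/(e^{z}-1)\right) = \frac{1}{2}\left(\sin(\phi_{j}-v) - e^{-u}\sin\phi_{j}\right)\big/(\cosh u - \cos v)$. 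It then remains to show that the numerator of $f_{2j-1}(x;n,N,h)$ in \eqref{f1} equals $(-1)^{m+j+1}\left(\sin(\phi_{j}-v) - e^{-u}\sin\phi_{j}\right)$; this follows by writing the phase as $\frac{\pi(2h-1)(2j-1)}{2N} = \frac{\pi(2m+1)(2j-1)}{2} - \phi_{j}$ and using that $(2m+1)(2j-1)$ is odd, so that via $\cos\left(\theta + (2k+1)\frac{\pi}{2}\right) = -(-1)^{k}\sin\theta$ the two cosines in \eqref{f1} reduce to multiples of $\sin(\phi_{j}-v)$ and $\sin\phi_{j}$. Consequently $f_{2j-1} = 2(-1)^{m+j+1}\,\textup{Im}\left(e^{i\phi_{j}}/\left(\exp\left((2n)^{1/N}\beta e^{i\phi_{j}}\right)-1\right)\right)$, and after reindexing $j \mapsto j+1$ (so $2j-1 \mapsto 2j+1$ and $1 \le j \le N/2$ becomes $0 \le j \le N/2 - 1$) the inner sum of \eqref{seven} becomes $2(-1)^{m}\sum_{j=0}^{N/2-1}(-1)^{j}\,\textup{Im}\left(e^{i\pi(2j+1)/(2N)}/\left(\exp\left((2n)^{1/N}\beta e^{i\pi(2j+1)/(2N)}\right)-1\right)\right)$; combined with the prefactor $(-1)^{h+1}/N = (-1)^{N/2+1}/N$ of \eqref{seven} this reproduces the block $-2(-1)^{N/2}\frac{(-1)^{m}}{N}\sum_{j}(-1)^{j}\,\textup{Im}(\cdots)$ of \eqref{neveneqn}.

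Finally I would collect all constants. Multiplying the whole identity by $\alpha^{-(2Nm-1)/(N+1)}$ and using $\alpha\beta^{N} = \pi^{N+1}$ to convert every surviving power of $\pi$ and of $2^{N}$ (from $x = 2^{N}\alpha$) into monomials $\alpha^{a}\beta^{b}$, one finds that the factor $\left(2\pi/x\right)^{1/N-2m}$ contributes exactly the $2^{(N-1)(2m-1/N)}$ of \eqref{neveneqn}, that the residual $\alpha$-exponent in the imaginary-part block cancels identically while its $\beta$-exponent collapses to $-(2Nm-1)/(N+1)$, and that the Bernoulli sum acquires the stated powers $\alpha^{2j/(N+1)}\beta^{N + (2N^{2}(m-j)-N)/(N+1)}$. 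The main obstacle is precisely this last bookkeeping together with the sign analysis of the third paragraph: keeping the many factors of $2$, $\pi$, $(-1)^{(\cdot)}$ and the fractional powers of $\alpha,\beta$ mutually consistent across all four groups of terms is delicate and is where an error is most likely to arise, whereas the functional-equation and Euler-formula steps are essentially mechanical.
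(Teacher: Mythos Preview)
Your proposal is correct and follows essentially the same route as the paper: specialize Theorem~\ref{ktycomp} at $h=\frac{N}{2}+Nm$, set $x=2^{N}\alpha$ with $\alpha\beta^{N}=\pi^{N+1}$, handle $\zeta((2m+1)N)/x$ via Euler's formula and the $\Gamma\zeta$-term via the functional equation, and then rewrite $S(x)$ in complex form. The only cosmetic difference is that the paper first invokes Lemma~\ref{ktywigl} to write each $f_{2j-1}$ as a real part and then uses the phase identity $e^{i\pi(2j+1)(2m+1)/2}=i(-1)^{j+m}$ to pass from $\textup{Re}$ to $\textup{Im}$, whereas you compute $\textup{Im}\bigl(e^{i\phi_j}/(e^{z}-1)\bigr)$ directly and match it to the numerator of $f_{2j-1}$; these are equivalent elementary manipulations.
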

Wigert's formula \eqref{wigertN} for real $x>0$ can be proved from Theorem \ref{neven} simply by taking $m=0$ and $\a=x/2^N, \b=(\pi^{N+1}/\a)^{\frac{1}{N}}$ and simplifying the resultant. At one point in the proof, one also needs to use the functional equation for $\zeta(s)$ in the form
\begin{equation}\label{zetafe}
\zeta(s)=2^s\pi^{s-1}\G(1-s)\zeta(1-s)\sin\left(\tfrac{1}{2}\pi s\right).
\end{equation}
This formula of Wigert, which could have been derived just from \eqref{ramseries} in the special case $h=N/2$, was missed by the authors in \cite{ktyhr}. The reason we put it in the above form is to compare it with Theorem \ref{zetagen}. Letting $N=2$, in turn, in Wigert's formula leads us to \eqref{ramzetahalf} after redefining the variables $\a$ and $\b$ to satisfy the condition on them given by Ramanujan.

An application of Theorem \ref{neven} towards transcendence of certain values is discussed in Section \ref{app}.

This paper is organized as follows. In Section \ref{exten} we prove Theorem \ref{ktycomp}. Section \ref{oddd} is devoted to proving the generalization of Ramanujan's formula for odd zeta values, that is, Theorem \ref{zetagen}. We also prove Theorem \ref{zetagenm0} in this section. Theorem \ref{neven}, which is a generalization of Wigert's formula, is proved in Section \ref{evenn}. Section \ref{app} is devoted to proving interesting results on transcendence of certain values as a result of applications of our transformations obtained in the previous sections. In Section \ref{cnesec}, we point out an error in a result of Chandrasekharan and Narasimhan, which when corrected, results in nothing but Ramanujan's formula for $\zeta(2m+1)$ for $m>0$. We conclude the paper with some remarks and future directions in Section \ref{cr}.

\section{An extension of the Kanemitsu-Tanigawa-Yoshimoto theorem}\label{exten}
We prove Theorem \ref{ktycomp} here. Even though some details in the proof are the same as in the proof of \eqref{ramseries} given in \cite{ktyhr}, we repeat them here for the sake of completeness. 

Theorem \ref{ktycomp} in the case $0<h\leq N/2$ is already proved in \cite{ktyhr}. As remarked in the introduction, their proof extends to $h\leq 0$ without much further effort. Now assume $h>N/2$. For Re$(s)=c_0>\max\left(\frac{N-2h+1}{N},1\right)=1$, it is easy to see that
\begin{align}\label{integ}
\sum_{n=1}^{\infty}\frac{n^{N-2h}}{e^{n^{N}x}-1}=\frac{1}{2\pi i}\int_{(c_{0})}\G(s)\zeta(s)\zeta\left(Ns-\left(N-2h\right)\right)x^{-s}\, {\rm d}s.
\end{align}
Here, and throughout the sequel, $\int_{(c)}$ denotes the line integral $\int_{c-i\infty}^{c+i\infty}$. 

We now shift the line of integration from Re$(s)=c_0$ to Re$(s)=-c$, where $c>\frac{2h}{N}-1$. The reason to choose this lower bound for $c$ is now explained.

It is well-known that $\G(s)$ has simple poles at $s=0$ and at negative integers. The simple trivial zeros of $\zeta(s)$ cancel the poles of $\G(s)$ at all negative even integers. The pole of $\zeta(Ns-(N-2h))$ is at $s=\frac{N-2h+1}{N}$ where as the trivial zeros of $\zeta(Ns-(N-2h))$ are at $s=\frac{N-2h-2j}{N}, j\in\mathbb{N}$. The pole of $\G(s)$ at $s=0$ does not get canceled with any zero of $\zeta(Ns-(N-2h))$, for if $\frac{N-2h-2j}{N}=0$ for some $j\in\mathbb{N}$, then this implies $h=N/2-j<N/2$, which is a contradiction. Similarly the pole of $\zeta(s)$ at $s=1$ does not cancel with any zero of $\zeta(Ns-(N-2h))$. Since the trivial zeros of $\zeta(s)$ are simple and they are already used in the poles of $\G(s)$ at the negative even integers, the pole of $\zeta(Ns-(N-2h))$ at $s=\frac{N-2h+1}{N}$ does not get canceled.

We next find out which poles of $\G(s)$ at the negative odd integers get canceled with the trivial zeros of $\zeta(Ns-(N-2h))$. Suppose $j$ is the minimum positive integer such that the zero $(N-2h-2j)/N$ of $\zeta(Ns-(N-2h))$ cancels with the pole $s=-(2\ell-1)$ of $\G(s)$. Then note that $\ell$ will also be the minimum positive integer for which this occurs. Now we show that all the poles of $\G(s)$ at negative odd integers after $-(2\ell-1)$ also get canceled. Note that this would happen only if $(N-2h-2j')/N=-(2\ell+f)$ for $f$ an odd positive integer and $j'$ some natural number. This would then imply $2h+2j'=N(2\ell+f+1)$, which, indeed, is valid for some natural number $j'$. Thus, all of the poles $-(2\ell-1), -(2\ell+1), -(2\ell+3),\cdots$ of $\G(s)$ get canceled by the zeros of $\zeta(Ns-(N-2h))$.

So the only poles of $\G(s)$ which remain to be investigated are $-1, -3, -5,\cdots, -(2\ell-3)$. Since $\ell$ depends on $N$ and $h$, we now need to find an expression for it in terms of $h$ and $N$. According to the definition, $j$ (and hence $\ell$) is the minimum positive integer such that $(N-2h-2j)/N=-(2\ell-1)$, that is, $j=\ell N-h$. Since $j>0$, we must have $\ell>h/N$, and since $\ell$ is the minimum positive integer for which this occurs, we see that $\ell=\left\lfloor\frac{h}{N}\right\rfloor+1$. This implies that $-(2\ell-3)=-\left(2\left\lfloor\frac{h}{N}\right\rfloor-1\right)$. Thus, the only poles of $\G(s)$ which contribute are $-1, -3, -5,\cdots, -\left(2\left\lfloor\frac{h}{N}\right\rfloor-1\right)$. 

This suggests that we shift the line of integration from $c_0>1$ to Re$(s)=-c$, where $c>2\left\lfloor\frac{h}{N}\right\rfloor-1$, so that $s=0, 1, \frac{N-2h+1}{N}$ and $s=-1, -3, -5,\cdots, -\left(2\left\lfloor\frac{h}{N}\right\rfloor-1\right)$ all turn out to be the poles of the integrand on the right side of \eqref{integ}. However, we need to shift the line a little further to the left, that is, to Re$(s)=-c$, where $c>\frac{2h}{N}-1$, the reason for which is two-fold. Firstly, even though $h>N/2$, we can still have $\left\lfloor\frac{h}{N}\right\rfloor=0$. So if the line of integration were shifted such that $c>2\left\lfloor\frac{h}{N}\right\rfloor-1$, this would allow $c$ to take negative values also. But this is undesirable as we would like to capture the pole of the integrand at $s=0$. The second reason for taking $c>\frac{2h}{N}-1$ will be clear soon.

Take the contour $\mathcal{C}$ determined by the line segments $[c_0 - i T, c_0 + i T], [c_0 + i T, -c + i T], [-c + i T, -c - i T]$ and $[-c - i T, c_0 - i T]$. By Cauchy's residue theorem, 
\begin{align}\label{crthm}
&\frac{1}{2\pi i}\left[\int_{c_0-iT}^{c_0+iT}+\int_{c_0+iT}^{-c+iT}+\int_{-c+iT}^{-c-iT}+\int_{-c-iT}^{c_0-iT}\right]\G(s)\zeta(s)\zeta\left(Ns-\left(N-2h\right)\right)x^{-s}\, {\rm d}s \nonumber\\
&\quad=R_{0}+R_{1}+R_{\frac{N-2h+1}{N}}+\sum_{j=1}^{\left\lfloor\frac{h}{N}\right\rfloor}R_{-(2j-1)},
\end{align}
where $R_{a}$ denotes the residue of the integrand at the pole $s=a$. 

As $T\to\infty$, the integrals along the horizontal segments $[c_0 + i T, -c + i T], [-c - i T, c_0 - i T]$ approach zero which is readily seen using Stirling's formula for $\G(s)$ and elementary bounds on the Riemann zeta function. So let $T\to\infty$ in \eqref{crthm} and use \eqref{integ} to deduce that
\begin{align*}
\sum_{n=1}^{\infty}\frac{n^{N-2h}}{e^{n^{N}x}-1}&=J(x)+R_{0}+R_{1}+R_{\frac{N-2h+1}{N}}+\sum_{j=1}^{\left\lfloor h/N\right\rfloor}R_{-(2j-1)},
\end{align*}
where
\begin{equation*}
J(x):=\int_{(-c)}\G(s)\zeta(s)\zeta\left(Ns-\left(N-2h\right)\right)x^{-s}\, {\rm d}s.
\end{equation*}
As shown in \cite{ktyhr},
\begin{align*}
R_{0}+R_{1}+R_{\frac{N-2h+1}{N}}=P(x),
\end{align*}
where $P(x)$ is defined in \eqref{pxfir}. Thus, the sum of residues at all of the poles is
\begin{align*}
P_1(x):=P_1(x;N,h):=
P(x)+\sum_{j=1}^{\left\lfloor\frac{h}{N}\right\rfloor}R_{-(2j-1)}.
\end{align*}
Now
\begin{align*}
R_{-(2j-1)}&=\lim_{s\to-(2j-1)}(s+2j-1)\G(s)\zeta(s)\zeta(Ns-(N-2h))x^{-s}\nonumber\\
&=\frac{-1}{(2j-1)!}\zeta(-(2j-1))\zeta(2h-2jN)x^{2j-1}\nonumber\\
&=\frac{(-1)^{h-jN+1}(2\pi)^{2h-2jN}B_{2j}B_{2h-2jN}}{2(2j)!(2h-2jN)!}x^{2j-1}.
\end{align*}
In the penultimate step above, we used the fact \cite[p.~179, Equation \text{(7.10)}]{con} that $\lim_{s\to -n}(s+n)\G(s)=\frac{(-1)^n}{n!}$, and in the ultimate step we used the fact \cite[p.~266, Equation (20)]{apostol-1998a} $\zeta(-n)=-\frac{B_{n+1}}{n+1}$ along with \eqref{zetaevenint}. 

This verifies the expression for $P_1(x)$ as given in \eqref{p1x}. The only other thing to be done is to show that $J(x)=S(x)$ for $c>\frac{2h}{N}-1$, where $S(x)$ is defined in \eqref{sodd} and \eqref{seven}.

Much of the remainder of the proof is exactly the same as in \cite{ktyhr}, and relies on making the change of variable $s\to 1-s$ in the integral $J(x)$, using the functional equation satisfied by the resulting integrand to simplify it. To avoid duplication of this part of the proof, we refer the reader to \cite[p.~14]{ktyhr}. Thus for Re$(s_1)=c_1=N(1+c)-2h+1$, we obtain
\begin{align*}
J(x)&=(-1)^{h+1}\left(\frac{2\pi}{x}\right)^{\frac{N-2h+1}{N}}\nonumber\\
&\quad\times\frac{1}{2\pi i}\int_{(c_1)}\left(\frac{(2\pi)^{N+1}}{x}\right)^{-\frac{s_1}{N}}\G(s_1)\zeta(s_1)\zeta\left(\frac{s_1+2h-1}{N}\right)C_{N}\left(\frac{\pi(s_1+2h-1)}{2N}\right)\frac{{\rm d}s_1}{N},
\end{align*}
where $C_{N}(z)$ is defined by
\begin{equation*}
C_{N}(z)=\frac{\sin(Nz)}{\sin(z)}.
\end{equation*}
Now we would like to, as in \cite{ktyhr}, expand $\zeta(s_1)\zeta\left(\frac{s_1+2h-1}{N}\right)$ as a Dirichlet series. However, this does not follow the argument in \cite{ktyhr} for, in the proof in there, the authors considered $0<h\leq N/2$, where as in our case, we have $h>N/2$. This is where it helps to have the line of integration shifted in the earlier part of the proof from Re$(s)=c_0$ to Re$(s)=-c$, where $c>\frac{2h}{N}-1$. Hence Re$(s_1)=c_1>1$ and also Re$\left(\frac{s_1+2h-1}{N}\right)=1+c>1$. Thus we can expand $\zeta(s_1)\zeta\left(\frac{s_1+2h-1}{N}\right)$ as
\begin{equation*}
\zeta(s_1)\zeta\left(\frac{s_1+2h-1}{N}\right)=\sum_{m,n=1}^{\infty}n^{-\frac{(2h-1)}{N}}\left(mn^{\frac{1}{N}}\right)^{-s_1}.
\end{equation*} 
The remainder now follows exactly as in \cite{ktyhr} and hence avoiding the repetition, we see that $J(x)$ indeed is equal to $S(x)$ defined in \eqref{sodd} and \eqref{seven}. This completes the proof.

\section{Proof of the generalization of Ramanujan's formula for $\zeta(2m+1)$}\label{oddd}
We begin with an elementary lemma which will be used several times in the sequel.
\begin{lemma}\label{ktywigl}
For $a, u, v\in\mathbb{R}$, we have
\begin{align*}
\frac{\cos(a\sin(u)+uv)-e^{-a\cos(u)}\cos(uv)}{\cosh(a\cos(u))-\cos(a\sin(u))}=2\textup{Re}\left(\frac{e^{iuv}}{\exp{\left(ae^{-iu}\right)}-1}\right).
\end{align*}
\end{lemma}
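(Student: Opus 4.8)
The plan is to establish the identity by a direct computation that starts from the right-hand side and reduces it to the left. To lighten the notation I would set $p:=a\cos(u)$ and $q:=a\sin(u)$, so that $ae^{-iu}=p-iq$. Euler's formula then gives $\exp(ae^{-iu})=e^{p}(\cos q-i\sin q)$, whence
\begin{equation*}
\exp(ae^{-iu})-1=(e^{p}\cos q-1)-ie^{p}\sin q.
\end{equation*}
Writing also $e^{iuv}=\cos(uv)+i\sin(uv)$, the quantity inside the real part becomes an explicit complex fraction whose real part I would extract in the standard way, by multiplying numerator and denominator by the complex conjugate of the denominator.

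First I would handle the denominator: the modulus squared of $\exp(ae^{-iu})-1$ is
\begin{equation*}
(e^{p}\cos q-1)^2+e^{2p}\sin^2 q=e^{2p}-2e^{p}\cos q+1=2e^{p}\left(\cosh p-\cos q\right),
\end{equation*}
using $\cosh p=\tfrac12(e^{p}+e^{-p})$. This already produces the denominator $\cosh(a\cos u)-\cos(a\sin u)$ of the left side, up to the factor $2e^{p}$, which I expect to cancel against the $2$ in front of $\textup{Re}$ and against a compensating power $e^{p}$ coming from the numerator.

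Next I would compute the real part of the numerator. Taking the real part of $(\cos(uv)+i\sin(uv))\big((e^{p}\cos q-1)+ie^{p}\sin q\big)$ gives $\cos(uv)(e^{p}\cos q-1)-e^{p}\sin(uv)\sin q$. Dividing through by $e^{p}$ and applying the angle-addition identity $\cos(uv)\cos q-\sin(uv)\sin q=\cos(uv+q)$, this collapses to $\cos(a\sin(u)+uv)-e^{-a\cos(u)}\cos(uv)$, which is exactly the numerator appearing on the left. Assembling the two pieces then yields the claim. There is no genuine obstacle here beyond careful bookkeeping; the only points demanding attention are the factorization of the modulus squared as $2e^{p}(\cosh p-\cos q)$ and the recognition of the cosine-of-a-sum in the numerator, since these are precisely what force the spurious factors of $e^{p}$ and the leading $2$ to cancel cleanly.
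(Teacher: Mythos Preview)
Your proposal is correct and follows essentially the same route as the paper's proof: both start from the right-hand side, multiply numerator and denominator by the conjugate of $\exp(ae^{-iu})-1$, recognise the modulus squared as $e^{2p}-2e^{p}\cos q+1=2e^{p}(\cosh p-\cos q)$, and then collapse the numerator via the cosine addition formula. Your substitution $p=a\cos u$, $q=a\sin u$ is a cosmetic tidying of the same computation.
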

\begin{proof}
Note that the right hand side can be simplified to
\begin{align*}
2\textup{Re}\left(\frac{e^{iuv}}{\exp{\left(ae^{-iu}\right)}-1}\right)=2\textup{Re}\left(\frac{\cos(uv)+i\sin(uv)}{e^{a\cos(u)}\left(\cos(a\sin(u))-i\sin(a\sin(u))\right)-1}\right).
\end{align*}
Multiply the numerator and the denominator by the conjugate of the denominator so that the right side becomes
{\allowdisplaybreaks\begin{align*}
&2\textup{Re}\left(\frac{\left(\cos(uv)+i\sin(uv)\right)\left(e^{a\cos(u)}\cos(a\sin(u))-1+ie^{a\cos(u)}\sin(a\sin(u))\right)}{e^{2a\cos(u)}-2e^{a\cos(u)}\cos(a\sin(u))+1}\right)\nonumber\\
&=\frac{2\left(e^{a\cos(u)}\cos(a\sin(u)+uv)-\cos(uv)\right)}{e^{2a\cos(u)}+1-2e^{a\cos(u)}\cos(a\sin(u))}\nonumber\\
&=\frac{\cos(a\sin(u)+uv)-e^{-a\cos(u)}\cos(uv)}{\cosh(a\cos(u))-\cos(a\sin(u))}.
\end{align*}}
\end{proof}

\begin{proof}[Theorem \textup{\ref{zetagen}}][]
We first show that all of the generalized Lambert series occurring in Theorem \ref{zetagen} converge. This is easily seen to be true for $\sum_{n=1}^{\infty}\frac{n^{-2Nm-1}}{\textup{exp}\left((2n)^{N}\a\right)-1}$ since $\a>0$ and $N>0$. For the remaining ones, it suffices to show that Re$\left((2n)^{\frac{1}{N}}\b e^{\frac{i\pi j}{N}}\right)>0$, that is, Re$\left(e^{\frac{i\pi j}{N}}\right)>0$ for $-\frac{N-1}{2}\leq j\leq\frac{N-1}{2}$. This is obvious since
\begin{equation*}
-\frac{\pi}{2}<-\frac{\pi(N-1)}{2N}\leq\frac{\pi j}{N}\leq\frac{\pi(N-1)}{2N}<\frac{\pi}{2}
\end{equation*}
implies that $\cos\left(\frac{\pi j}{N}\right)>0$ for $-\frac{N-1}{2}\leq j\leq\frac{N-1}{2}$.

Let $N$ be an odd positive integer. In Theorem \ref{ktycomp}, let $h=\frac{N+1}{2}+Nm$, where $m\in\mathbb{Z}\backslash\{0\}$. Upon simplification, this gives
{\allowdisplaybreaks\begin{align}\label{oddzetax}
&\frac{1}{2}\zeta(2Nm+1)+\sum_{n=1}^{\infty}\frac{n^{-2Nm-1}}{\textup{exp}\left(n^{N}x\right)-1}\nonumber\\
&=\frac{(-1)^{m}}{2N}\left(\frac{x}{2\pi}\right)^{2m}\zeta(2m+1)+\frac{1}{x}\zeta(N+1+2Nm)\nonumber\\
&\quad+\frac{1}{2}(-1)^{m+\frac{N+3}{2}}(2\pi)^{N+1+2Nm}\sum_{j=1}^{\left\lfloor\frac{N+1}{2N}+m\right\rfloor}\frac{(-1)^jB_{2j}B_{N+1+2N(m-j)}}{(2\pi)^{2jN}(2j)!(N+1+2N(m-j))!}x^{2j-1}\nonumber\\
&\quad+\frac{(-1)^{m+\frac{N+3}{2}}}{N}\left(\frac{x}{2\pi}\right)^{2m}\sum_{n=1}^{\infty}\frac{1}{n^{2m+1}}\left\{\frac{1}{e^{a}-1}+\sum_{j=1}^{\frac{N-1}{2}}\frac{\cos(a\sin(u)+uv)-e^{-a\cos(u)}\cos(uv)}{\cosh(a\cos(u))-\cos(a\sin(u))}\right\},
\end{align}}
where $a=2A_N\left(\frac{n}{x}\right), u=\frac{\pi j}{N}$ and $v=2h-1=(2m+1)N$. 

Let $x=2^{N}\a$ and let $\a\b^{N}=\pi^{N+1}$ so that $\b=2\pi\left(\frac{\pi}{x}\right)^{1/N}$, and hence $a=(2n)^{\frac{1}{N}}\b$. 

We write
\begin{align}\label{powerpiab1}
\left(\frac{x}{2\pi}\right)^{2m}=\left(\frac{2^{N}\a}{2\a^{\frac{1}{N+1}}\b^{\frac{N}{N+1}}}\right)^{2m}=2^{2m(N-1)}\a^{\frac{2Nm}{N+1}}\b^{-\frac{2Nm}{N+1}},
\end{align}
and
\begin{align}\label{powerpiab2}
(2\pi)^{N+1+2Nm-2jN}x^{2j-1}&=(2\pi)^{N+1+2Nm-2jN}\left(2^{N}\a\right)^{2j-1}\nonumber\\
&=2^{2Nm+1}\a^{2j-1}\left(\left(\frac{\pi^{N+1}}{\a}\right)^{1+\frac{2N}{N+1}(m-j)}\a^{1+\frac{2N}{N+1}(m-j)}\right)\nonumber\\
&=2^{2Nm+1}\a^{2j+\frac{2N}{N+1}(m-j)}\b^{N+\frac{2N^2}{N+1}(m-j)}.
\end{align}
Now substitute \eqref{powerpiab1} in the first and the last expressions on the right side of \eqref{oddzetax}, and \eqref{powerpiab2} in the third expression. (Since $N+1+2Nm$ is even, using \eqref{zetaevenint}, the second expression $\zeta(N+1+2Nm)/x$ in \eqref{oddzetax} can be absorbed into the third one as its $j=0$ term.) Then divide both sides of the resulting equation by $\a^{\frac{2Nm}{N+1}}$ and invoke Lemma \ref{ktywigl} for simplifying the last expression on the right of \eqref{oddzetax} to arrive at
\begin{align}\label{ramng}
&\a^{-\frac{2Nm}{N+1}}\left(\frac{1}{2}\zeta(2Nm+1)+\sum_{n=1}^{\infty}\frac{n^{-2Nm-1}}{\textup{exp}\left((2n)^{N}\a\right)-1}\right)\nonumber\\
&=\left(-\b^{\frac{2N}{N+1}}\right)^{-m}\frac{2^{2m(N-1)}}{N}\Bigg(\frac{1}{2}\zeta(2m+1)+(-1)^{\frac{N+3}{2}}\sum_{n=1}^{\infty}\frac{1}{n^{2m+1}}\Bigg\{\frac{1}{\exp{\left((2n)^{\frac{1}{N}}\b\right)}-1}\nonumber\\
&\qquad\qquad\qquad\qquad\qquad\qquad+\sum_{j=1}^{\frac{N-1}{2}}2\textup{Re}\Bigg(\frac{e^{i\pi j(2m+1)}}{\exp{\left((2n)^{\frac{1}{N}}\b e^{\frac{-i\pi j}{N}}\right)}-1}\Bigg)\Bigg\}\Bigg)\nonumber\\
&\quad+(-1)^{m+\frac{N+3}{2}}2^{2Nm}\sum_{j=0}^{\left\lfloor\frac{N+1}{2N}+m\right\rfloor}\frac{(-1)^jB_{2j}B_{N+1+2N(m-j)}}{(2j)!(N+1+2N(m-j))!}\a^{\frac{2j}{N+1}}\b^{N+\frac{2N^2(m-j)}{N+1}}.
\end{align}
Note that $e^{i\pi j(2m+1)}=(-1)^{j}$, thus 
{\allowdisplaybreaks\begin{align}\label{gad}
&\sum_{n=1}^{\infty}\frac{1}{n^{2m+1}}\Bigg\{\frac{1}{\exp{\left((2n)^{\frac{1}{N}}\b\right)}-1}+\sum_{j=1}^{\frac{N-1}{2}}2\textup{Re}\Bigg(\frac{e^{i\pi j(2m+1)}}{\exp{\left((2n)^{\frac{1}{N}}\b e^{\frac{-i\pi j}{N}}\right)}-1}\Bigg)\Bigg\}\nonumber\\
&=\sum_{n=1}^{\infty}\frac{n^{-2m-1}}{\exp{\left((2n)^{\frac{1}{N}}\b\right)}-1}\nonumber\\
&\quad+\sum_{j=1}^{\frac{N-1}{2}}(-1)^{j}\left(\sum_{n=1}^{\infty}\frac{n^{-2m-1}}{\exp{\left((2n)^{\frac{1}{N}}\b e^{\frac{i\pi j}{N}}\right)}-1}+\sum_{n=1}^{\infty}\frac{n^{-2m-1}}{\exp{\left((2n)^{\frac{1}{N}}\b e^{\frac{-i\pi j}{N}}\right)}-1}\right)\nonumber\\
&=\sum_{j=\frac{-(N-1)}{2}}^{\frac{N-1}{2}}(-1)^{j}\sum_{n=1}^{\infty}\frac{n^{-2m-1}}{\textup{exp}\left((2n)^{\frac{1}{N}}\b e^{\frac{i\pi j}{N}}\right)-1}.
\end{align}}
Substituting \eqref{gad} in \eqref{ramng} leads to \eqref{zetageneqn}, thereby completing the proof of Theorem \ref{zetagen}.
\end{proof}
The following corollary of Theorem \ref{zetagen} gives a nice relation between $\zeta(3)$ and $\zeta(7)$.
\begin{corollary}\label{corz3z7}
Let $\omega:=\frac{-1+\sqrt{3}i}{2}$ denote a cube-root of unity. Let $\a,\b>0$ such that $\a\b^3=\pi^4$. Then
\begin{align}\label{z3z7}
&\a^{-\frac{3}{2}}\left(\frac{1}{2}\zeta(7)+\sum_{n=1}^{\infty}\frac{1}{n^7(\exp{\left(8n^3\a\right)}-1)}\right)\nonumber\\
&=\frac{\b^{\frac{15}{2}}}{748440}+\frac{\a^{\frac{1}{2}}\b^3}{135}-\frac{16}{3}\b^{-\frac{3}{2}}\Bigg(\frac{1}{2}\zeta(3)-\sum_{n=1}^{\infty}\frac{n^{-3}}{\textup{exp}\left((2n)^{\frac{1}{3}}\b\right)-1}\nonumber\\
&\quad+\sum_{n=1}^{\infty}\frac{n^{-3}}{\exp{\big(-(2n)^{\frac{1}{3}}\b\omega\big)-1}}+\sum_{n=1}^{\infty}\frac{n^{-3}}{\exp{\big(-(2n)^{\frac{1}{3}}\b\omega^2\big)-1}}\Bigg).
\end{align}
\end{corollary}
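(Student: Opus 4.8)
The plan is to obtain Corollary \ref{corz3z7} as the single specialization $N=3$, $m=1$ of Theorem \ref{zetagen}; no new analytic ingredient is needed, so the task is purely one of bookkeeping. First I would set $N=3$, $m=1$ in \eqref{zetageneqn}. On the left-hand side the exponent $-2Nm/(N+1)$ becomes $-3/2$, while $2Nm+1=7$ and $(2n)^{N}=8n^{3}$, so the left side collapses at once to $\a^{-3/2}\left(\tfrac12\zeta(7)+\sum_{n=1}^{\infty}n^{-7}/(\exp(8n^{3}\a)-1)\right)$, which is precisely the left side of \eqref{z3z7}. The condition $\a\b^{N}=\pi^{N+1}$ also specializes correctly to $\a\b^{3}=\pi^{4}$.

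Next I would evaluate the elementary prefactors on the right. With $N=3$, $m=1$ one has $(-1)^{(N+3)/2}=(-1)^{3}=-1$, the factor $\left(-\b^{2N/(N+1)}\right)^{-m}$ equals $\left(-\b^{3/2}\right)^{-1}=-\b^{-3/2}$, and $2^{2m(N-1)}/N=2^{4}/3=16/3$, so the first block carries the constant $-\tfrac{16}{3}\b^{-3/2}$, matching \eqref{z3z7}. Inside the parentheses the index $j$ runs over $-1,0,1$; writing out the three terms, weighting each by $(-1)^{j}$, and multiplying by the outer sign $(-1)^{(N+3)/2}=-1$ reproduces the combination $\tfrac12\zeta(3)-\sum_{n=1}^{\infty}n^{-3}/(\exp((2n)^{1/3}\b)-1)$ together with the two series indexed by $j=\pm1$.

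The only step that is not purely mechanical is recognizing the two $j=\pm1$ exponentials in terms of $\omega$. From $\omega=e^{2\pi i/3}$ one checks that $e^{i\pi/3}=-\omega^{2}$ and $e^{-i\pi/3}=-\omega$, so that the $j=1$ term $\exp((2n)^{1/3}\b\,e^{i\pi/3})$ becomes $\exp(-(2n)^{1/3}\b\,\omega^{2})$ and the $j=-1$ term $\exp((2n)^{1/3}\b\,e^{-i\pi/3})$ becomes $\exp(-(2n)^{1/3}\b\,\omega)$; substituting these produces exactly the last two sums of \eqref{z3z7}. I expect this identification, together with the attendant sign tracking through the two layers of $(-1)^{j}$ and $(-1)^{(N+3)/2}$, to be the one place where a slip is most likely.

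Finally I would evaluate the finite Bernoulli sum. Here $\lfloor (N+1)/(2N)+m\rfloor=\lfloor 4/6+1\rfloor=1$, so only the indices $j=0$ and $j=1$ survive, the overall sign is $(-1)^{m+(N+3)/2}=(-1)^{4}=1$, and $2^{2Nm}=64$. Inserting $B_{0}=1$, $B_{2}=\tfrac16$, $B_{4}=-\tfrac1{30}$, $B_{10}=\tfrac{5}{66}$ together with the factorials and the powers $\a^{2j/(N+1)}$, $\b^{N+2N^{2}(m-j)/(N+1)}$, the $j=0$ term reduces to $\b^{15/2}/748440$ and the $j=1$ term to $\a^{1/2}\b^{3}/135$. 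Collecting these two contributions with the first block then yields \eqref{z3z7}, completing the proof.
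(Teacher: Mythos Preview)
Your proposal is correct and follows exactly the same route as the paper: specialize Theorem \ref{zetagen} to $N=3$, $m=1$ and simplify. The paper's proof records the intermediate form after substitution and then simply says ``This gives \eqref{z3z7} upon simplification,'' whereas you have carried out that simplification explicitly (the identification $e^{\pm i\pi/3}=-\omega^{\mp 1}$ and the evaluation of the two Bernoulli terms), so your write-up in fact supplies more detail than the original.
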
     
\begin{proof}
Let $N=3$ and $m=1$ in Theorem \ref{zetagen}. This gives
{\allowdisplaybreaks\begin{align*}
&\a^{-\frac{3}{2}}\left(\frac{1}{2}\zeta(7)+\sum_{n=1}^{\infty}\frac{1}{n^7(\exp{\left(8n^3\a\right)}-1)}\right)\nonumber\\
&=-\frac{16}{3}\b^{-\frac{3}{2}}\left(\frac{1}{2}\zeta(3)-\sum_{j=-1}^{1}(-1)^{j}\sum_{n=1}^{\infty}\frac{n^{-3}}{\textup{exp}\left((2n)^{\frac{1}{3}}\b e^{\frac{i\pi j}{3}}\right)-1}\right)\nonumber\\
&\quad+2^6\sum_{j=0}^{1}\frac{(-1)^jB_{2j}B_{10-6j}}{(2j)!(10-6j)!}\a^{\frac{j}{2}}\b^{3+\frac{9(1-j)}{2}}.
\end{align*}}
This gives \eqref{z3z7} upon simplification.
\end{proof}

\subsection{A generalization of the transformation for $\log\eta(z)$}\label{eta}
\begin{proof}[Theorem \textup{\ref{zetagenm0}}][]
Since the proof is essentially similar to that of Theorem \ref{zetagen}, we only indicate the places where it differs from that of the latter. To that end, we let $h=\frac{N+1}{2}$ in Theorem \ref{ktycomp}. This gives for Re$(s)=c_0>1$,
\begin{align}\label{integ1}
\sum_{n=1}^{\infty}\frac{1}{n\left(e^{n^{N}x}-1\right)}=\frac{1}{2\pi i}\int_{(c_{0})}\G(s)\zeta(s)\zeta\left(Ns+1\right)x^{-s}\, {\rm d}s.
\end{align}
We then shift the line of integration from Re$(s)=c_0$ to Re$(s)=-c$, where $c>1/N$. In the shifting process, we encounter a simple pole of the integrand at $s=1$ with the residue $R_{1}=\zeta(N+1)x^{-1}$. The pole at $s=0$ is of order two, as both $\G(s)$ and $\zeta(Ns+1)$ have simple poles at $s=0$. The residue $R_{0}$ at this simple pole can be calculated to be
\begin{equation*}
R_0=\frac{1}{2N}\left(\g (1-N)-\log(2\pi)+\log(x)\right).
\end{equation*}
The only other pole of the integrand that we encounter is at $s=-1$, and that too only when $N=1$. Its residue is $R_{-1}=-x/24$. Now proceeding exactly along the similar lines of the proof of Theorem \ref{zetagen}, one obtains
\begin{align}\label{befgxn}
\sum_{n=1}^{\infty}\frac{1}{n\left(e^{n^{N}x}-1\right)}&=-\frac{1}{2N}\left((N-1)\g+\log(2\pi)-\log(x)\right)+\frac{\zeta(N+1)}{x}+g(x, N)\nonumber\\
&\quad+\frac{1}{N}(-1)^{\frac{N+3}{2}}\sum_{j=-\frac{(N-1)}{2}}^{\frac{N-1}{2}}(-1)^j\sum_{n=1}^{\infty}\frac{1}{n \left(\exp{\left(2\pi\left(\frac{2\pi n}{x}\right)^{\frac{1}{N}}e^{\frac{i\pi j}{N}}\right)}-1\right)},
\end{align}
where
\begin{equation}
g(x, N):=\begin{cases}
-\frac{x}{24},\hspace{3mm}\text{if}\hspace{1mm} N=1,\\
0,\hspace{3mm}\text{otherwise}.
\end{cases}
\end{equation}
Note that 
\begin{equation*}
g(x, N)=(-1)^{\frac{N+3}{2}}2^N\pi^{N+1}\sum_{j=1}^{\left\lfloor\frac{N+1}{2N}\right\rfloor}\left(\frac{-1}{4\pi^2}\right)^{jN}\frac{B_{2j}B_{N+1-2Nj}}{(2j)!(N+1-2Nj)!}x^{2j-1}.
\end{equation*}
Using the above form of $g(x, N)$ in \eqref{befgxn}, letting $x=2^{N}\a$, $\a\b^{N}=\pi^{N+1}$ and using \eqref{powerpiab2} with $m=0$, we arrive at \eqref{zetagenm0eqn}.
\end{proof}
The case $N=3$ of Theorem \ref{zetagenm0} gives the following new result:
\begin{corollary}\label{abpi}
Let $\omega:=\frac{-1+\sqrt{3}i}{2}$ denote a cube-root of unity. For $\a,\b>0$ such that $\a\b^3=\pi^4$, we have
\begin{align*}
&\sum_{n=1}^{\infty}\frac{1}{n(\exp{\left(8n^3\a\right)}-1)}+\frac{1}{3}\Bigg(\sum_{n=1}^{\infty}\frac{1}{n\left(\exp{((2n)^{\frac{1}{3}}\b)}-1\right)}\nonumber\\
&\quad-\sum_{n=1}^{\infty}\frac{1}{n\left(\textup{exp}\left(-(2n)^{\frac{1}{3}}\b\omega\right)-1\right)}-\sum_{n=1}^{\infty}\frac{1}{n\left(\textup{exp}\left(-(2n)^{\frac{1}{3}}\b\omega^{2}\right)-1\right)}\Bigg)\nonumber\\
&=\frac{1}{3}\left(\log 2-\gamma\right)+\frac{1}{8}\log\left(\frac{\a}{\b}\right)+\frac{\b^3}{720}.
\end{align*}
\end{corollary}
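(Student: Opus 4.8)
The plan is to obtain Corollary \ref{abpi} as the direct specialization $N=3$ of Theorem \ref{zetagenm0}, so that the entire task reduces to bookkeeping: substitute $N=3$ into \eqref{zetagenm0eqn} and simplify every constant, exponential, and Bernoulli term. First I would record the global sign $(-1)^{(N+3)/2}=(-1)^{3}=-1$. This turns the prefactor $-\tfrac{(-1)^{(N+3)/2}}{N}$ sitting in front of the $j$-sum on the left into $+\tfrac13$, and the first sum on the left becomes $\sum_{n=1}^{\infty}\tfrac{1}{n(\exp(8n^{3}\a)-1)}$ since $(2n)^{3}=8n^{3}$. With $N=3$ the inner index $j$ runs over $\{-1,0,1\}$.

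The one step that genuinely requires care is matching the three kernels $e^{i\pi j/3}$, $j=-1,0,1$, with the cube roots of unity named in the statement. For $j=0$ the factor is $1$, producing $\sum_{n=1}^{\infty}\tfrac{1}{n(\exp((2n)^{1/3}\b)-1)}$. For $j=\pm1$ I would use $\omega=e^{2\pi i/3}$ to verify $e^{i\pi/3}=-\omega^{2}$ and $e^{-i\pi/3}=-\omega$ (from $e^{i\pi/3}=e^{i\pi}e^{-2\pi i/3}$ and $e^{-i\pi/3}=e^{i\pi}e^{2\pi i/3}$). Then the $j=1$ term, carrying $(-1)^{1}=-1$, becomes $-\sum_{n=1}^{\infty}\tfrac{1}{n(\exp(-(2n)^{1/3}\b\omega^{2})-1)}$, and the $j=-1$ term, carrying $(-1)^{-1}=-1$, becomes $-\sum_{n=1}^{\infty}\tfrac{1}{n(\exp(-(2n)^{1/3}\b\omega)-1)}$. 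Multiplying the resulting three series by $\tfrac13$ reproduces exactly the left-hand side of the asserted identity.

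On the right-hand side the constants specialize cleanly: $\tfrac{(N-1)(\log2-\gamma)}{2N}=\tfrac13(\log2-\gamma)$ and $\tfrac{\log(\a/\b)}{2(N+1)}=\tfrac18\log(\a/\b)$. For the Bernoulli sum, $\big\lfloor\tfrac{N+1}{2N}\big\rfloor=\big\lfloor\tfrac{4}{6}\big\rfloor=0$, so only the $j=0$ term survives; by Remark \ref{ng1} (equivalently, using $B_{0}=1$ and $B_{4}=-\tfrac{1}{30}$) it equals $\tfrac{B_{4}\b^{3}}{4!}=-\tfrac{\b^{3}}{720}$, and the outer sign $(-1)^{(N+3)/2}=-1$ flips it to $+\tfrac{\b^{3}}{720}$. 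Assembling these pieces gives the claimed formula. There is no real obstacle here beyond the sign-and-root-of-unity bookkeeping: the only place an error could enter is in pinning down $e^{\pm i\pi/3}=-\omega^{2},-\omega$ and in tracking how the factors $(-1)^{j}$ interact with the global sign $(-1)^{(N+3)/2}$, so that is where I would concentrate the verification.
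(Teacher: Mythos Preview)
Your proposal is correct and follows precisely the same route as the paper, which also derives the corollary as the direct specialization $N=3$ of Theorem~\ref{zetagenm0}. Your bookkeeping is accurate, in particular the identifications $e^{i\pi/3}=-\omega^{2}$ and $e^{-i\pi/3}=-\omega$ and the evaluation of the single surviving Bernoulli term via Remark~\ref{ng1}.
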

\section{Proof of the generalization of Wigert's formula for $\zeta\left(\frac{1}{N}\right)$}\label{evenn}
In this section we prove the counterpart of Theorem \ref{zetagen} for $N$ even. 
\begin{proof}[Theorem \textup{\ref{neven}}][]
Note that the series $\displaystyle\sum_{n=1}^{\infty}\frac{n^{-2Nm}}{\textup{exp}\left((2n)^{N}\a\right)-1}$ converges since $\a>0$ and $N>0$. It suffices to show for the remaining Lambert series that \newline Re$\left(\exp{\left(\frac{i\pi(2j+1)}{2N}\right)}\right)>0$ for $0\leq j\leq\frac{N}{2}-1$. To that end, note that
\begin{equation*}
0<\frac{\pi}{2N}\leq\frac{\pi(2j+1)}{2N}\leq\frac{\pi(N-1)}{2N}<\frac{\pi}{2},
\end{equation*}
so that $\cos\left(\frac{\pi(2j+1)}{2N}\right)>0$ and hence the series converge.

Let $N$ be an even positive integer. In Theorem \ref{ktycomp}, let $h=\frac{N}{2}+Nm$, where $m\in\mathbb{Z}$. After some simplification, this gives
{\allowdisplaybreaks\begin{align}\label{genwig1s}
&\frac{1}{2}\zeta(2Nm)+\sum_{n=1}^{\infty}\frac{n^{-2Nm}}{e^{n^{N}x}-1}\nonumber\\
&=\frac{1}{x}\zeta((2m+1)N)+\frac{1}{N}\G\left(\frac{1-2Nm}{N}\right)\zeta\left(\frac{1-2Nm}{N}\right)x^{-\frac{(1-2Nm)}{N}}\nonumber\\
&\quad+\frac{(-1)^{\frac{N}{2}+1}}{N}\left(\frac{2\pi}{x}\right)^{\frac{1-2Nm}{N}}\sum_{n=1}^{\infty}\frac{1}{n^{2m+1-\frac{1}{N}}}\sum_{j=0}^{\frac{N}{2}-1}\frac{\cos(a\sin(u)+uv)-e^{-a\cos(u)}\cos(uv)}{\cosh(a\cos(u))-\cos(a\sin(u))}\nonumber\\
&\quad+(-1)^{\frac{N}{2}+1}2^{(2m+1)N-1}\pi^{(2m+1)N}\sum_{j=1}^{m}\frac{B_{2j}B_{(2m+1-2j)N}}{(2j)!((2m+1-2j)N)!}(2\pi)^{-2jN}x^{2j-1},
\end{align}}
where $a=2A_N\left(\frac{n}{x}\right), u=\frac{\pi (2j+1)}{2N}$ and $v=2h-1=(2m+1)N-1$.

Note that $(2m+1)N$ is even. Hence applying \eqref{zetaevenint}, it is easy to see that the expression $\zeta((2m+1)N)/x$ can be absorbed into the last expression in \eqref{genwig1s} as its $j=0$ term. 

Let $x=2^{N}\a$ and let $\a\b^{N}=\pi^{N+1}$ so that $\b=2\pi\left(\frac{\pi}{x}\right)^{1/N}$, and hence $a=(2n)^{\frac{1}{N}}\b$.

\noindent
Consider the second expression on the right side of \eqref{genwig1s}. Since $x=2^{N}\a=\pi\left(\frac{2\pi}{\b}\right)^{N}$, we get
\begin{equation*}
x^{-\frac{(1-2Nm)}{N}}=\pi^{-\frac{(1-2Nm)}{N}}\left(\frac{2\pi}{\b}\right)^{2Nm-1};
\end{equation*}
use the functional equation \eqref{zetafe} in the form $\pi^{-s}\G(s)\zeta(s)=2^{s-1}\zeta(1-s)/\cos\left(\frac{\pi s}{2}\right)$ to get
\begin{align}\label{e1c}
&\frac{1}{N}\G\left(\frac{1-2Nm}{N}\right)\zeta\left(\frac{1-2Nm}{N}\right)x^{-\frac{(1-2Nm)}{N}}\nonumber\\
&=\frac{(-1)^{m}}{N}\frac{2^{\frac{1}{N}-(2m+1)}}{\cos\left(\frac{\pi}{2N}\right)}\zeta\left(2m+1-\frac{1}{N}\right)\left(\frac{2\pi}{\b}\right)^{2Nm-1}.
\end{align}
From \eqref{powerpiab2}, 
\begin{align}\label{e2c}
(2\pi)^{(2m+1)N-1-2jN}x^{2j-1}=\frac{2^{2Nm-1}}{\pi^2}\a^{2j+\frac{2N}{N+1}(m-j)}\b^{N+\frac{2N^2}{N+1}(m-j)},
\end{align}
We now want to write the third expression in \eqref{genwig1s} in terms of generalized Lambert series. Firstly using \eqref{powerpiab1}, we see that
\begin{equation}\label{fly}
\left(\frac{2\pi}{x}\right)^{\frac{1-2Nm}{N}}=2^{(N-1)\left(2m-\frac{1}{N}\right)}\a^{\frac{2Nm-1}{N+1}}\b^{\frac{1-2Nm}{N+1}}.
\end{equation}
Secondly, invoking Lemma \ref{ktywigl} and using the fact that $\exp{\left(\tfrac{1}{2}\left(i\pi(2j+1)(2m+1)\right)\right)}=i(-1)^{j+m}$ in the second step below, we deduce that
{\allowdisplaybreaks\begin{align}\label{sly}
&\sum_{j=0}^{\frac{N}{2}-1}\frac{\cos(a\sin(u)+uv)-e^{-a\cos(u)}\cos(uv)}{\cosh(a\cos(u))-\cos(a\sin(u))}\nonumber\\&=2\sum_{j=0}^{\frac{N}{2}-1}\textup{Re}\left(\frac{\exp{\left(\frac{i\pi}{2N}(2j+1)((2m+1)N-1)\right)}}{\exp{\left((2n)^{\frac{1}{N}}\b e^{-\frac{i\pi(2j+1)}{2N}}\right)}}\right)\nonumber\\
&=2(-1)^{j+m+1}\sum_{j=0}^{\frac{N}{2}-1}\textup{Im}\left(\frac{\exp{\left(-\frac{i\pi(2j+1)}{2N}\right)}}{\exp{\left((2n)^{\frac{1}{N}}\b e^{-\frac{i\pi(2j+1)}{2N}}\right)}}\right)\nonumber\\
&=i(-1)^{j+m+1}\sum_{j=0}^{\frac{N}{2}-1}\left(\frac{\exp{\left(\frac{i\pi(2j+1)}{2N}\right)}}{\exp{\left((2n)^{\frac{1}{N}}\b e^{\frac{i\pi(2j+1)}{2N}}\right)}}-\frac{\exp{\left(-\frac{i\pi(2j+1)}{2N}\right)}}{\exp{\left((2n)^{\frac{1}{N}}\b e^{-\frac{i\pi(2j+1)}{2N}}\right)}}\right)
\end{align}}
Thus from \eqref{fly} and \eqref{sly}, the third expression in \eqref{genwig1s} can be written as
\begin{align}\label{fsly}
&\frac{(-1)^{\frac{N}{2}+1}}{N}\left(\frac{2\pi}{x}\right)^{\frac{1-2Nm}{N}}\sum_{n=1}^{\infty}\frac{1}{n^{2m+1-\frac{1}{N}}}\sum_{j=0}^{\frac{N}{2}-1}\frac{\cos(a\sin(u)+uv)-e^{-a\cos(u)}\cos(uv)}{\cosh(a\cos(u))-\cos(a\sin(u))}\nonumber\\
&=i\frac{(-1)^{\frac{N}{2}+m}}{N}2^{(N-1)\left(2m-\frac{1}{N}\right)}\a^{\frac{2Nm-1}{N+1}}\b^{\frac{1-2Nm}{N+1}}\sum_{j=0}^{\frac{N}{2}-1}(-1)^j\nonumber\\
&\quad\times\Bigg(e^{\frac{i\pi(2j+1)}{2N}}\sum_{n=1}^{\infty}\frac{n^{\frac{1}{N}-(2m+1)}}{\exp{\left((2n)^{\frac{1}{N}}\b e^{\frac{i\pi(2j+1)}{2N}}\right)}-1}-e^{\frac{-i\pi(2j+1)}{2N}}\sum_{n=1}^{\infty}\frac{n^{\frac{1}{N}-(2m+1)}}{\exp{\left((2n)^{\frac{1}{N}}\b e^{-\frac{i\pi(2j+1)}{2N}}\right)}-1}\Bigg).
\end{align}
Now substitute $2^{N}\a$ for $x$ on the left side of \eqref{genwig1s} and rewrite its right side as follows. Absorb $\zeta((2m+1)N)/x$ as the $j=0$ term of the last expression, substitute \eqref{e1c} and \eqref{e2c} in the second and the last expressions respectively, and substitute \eqref{fsly} for the third expression. Upon doing this, divide both sides of the resulting equivalent of \eqref{genwig1s} by $\a^{\frac{2Nm-1}{N+1}}$. This gives \eqref{neveneqn} after simplification and thus completes the proof.
\end{proof}

\section{Applications}\label{app}
This section is devoted to applications of our Theorems \ref{zetagen}, \ref{zetagenm0} and \ref{neven} towards proving results on transcendence of certain expressions. It is important to note that these transcendence results might have been exceedingly difficult to prove if we did not have our transformations to begin with. We start with a Zudilin-type result.
\begin{corollary}\label{transc}
Let $N$ and $m$ be any two fixed odd natural numbers. Then at least one of the expressions
\begin{equation*}
\zeta(2m+1), \zeta(2Nm+1), \sum_{n=1}^{\infty}\frac{n^{-2Nm-1}}{\textup{exp}\left((2n)^N\pi\right)-1},\hspace{2mm}and\hspace{2mm}\sum_{n=1}^{\infty}\frac{1}{n^{2m+1}}\textup{Re}\bigg(\frac{1}{\textup{exp}\big((2n)^{\frac{1}{N}}\pi e^{\frac{i\pi j}{N}}\big)-1}\bigg), 
\end{equation*}
where $j$ takes every value from $0$ to $\frac{N-1}{2}$,
is transcendental. The above conclusion also holds for any fixed even natural number $m$ so long as $N$ is any fixed odd number strictly greater than $1$.
\end{corollary}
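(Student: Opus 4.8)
The plan is to apply Theorem~\ref{zetagen} at the symmetric point $\a=\b=\pi$, which is admissible since then $\a\b^{N}=\pi\cdot\pi^{N}=\pi^{N+1}$, and to argue by contradiction: I would assume that \emph{every} one of the listed quantities is algebraic and derive a contradiction with the transcendence of $\pi$ (Lindemann's theorem). First I would record what \eqref{zetageneqn} becomes at $\a=\b=\pi$. After multiplying both sides by $\pi^{2Nm/(N+1)}$ and using $(-1)^{-m}=(-1)^{m}$, the fractional powers of $\pi$ attached to the two Lambert-series blocks cancel completely, and the identity collapses to $A=(-1)^{m}\frac{2^{2m(N-1)}}{N}B+C$, where
\begin{equation*}
A=\frac{1}{2}\zeta(2Nm+1)+\sum_{n=1}^{\infty}\frac{n^{-2Nm-1}}{\exp\left((2n)^{N}\pi\right)-1},
\end{equation*}
\begin{equation*}
B=\frac{1}{2}\zeta(2m+1)+(-1)^{\frac{N+3}{2}}\sum_{j=-\frac{N-1}{2}}^{\frac{N-1}{2}}(-1)^{j}\sum_{n=1}^{\infty}\frac{n^{-2m-1}}{\exp\left((2n)^{\frac1N}\pi e^{i\pi j/N}\right)-1},
\end{equation*}
and $C$ is precisely the Bernoulli-number sum of \eqref{zetageneqn}, in which a short computation shows that every surviving power of $\pi$ carries the \emph{integer} exponent $N+2Nm-2j(N-1)$.

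The next step is to rewrite $B$ in terms of the quantities actually named in the statement. Since $(2n)^{1/N}\pi$ is real, the $j$ and $-j$ summands are complex conjugates and $(-1)^{-j}=(-1)^{j}$; pairing them converts the inner sum into $S_{0}+2\sum_{j=1}^{(N-1)/2}(-1)^{j}S_{j}$, where $S_{j}:=\sum_{n=1}^{\infty}\frac{1}{n^{2m+1}}\textup{Re}\left(\frac{1}{\exp\left((2n)^{\frac1N}\pi e^{i\pi j/N}\right)-1}\right)$ is exactly the $j$-th listed expression. Hence $A$ is a $\mathbb{Q}$-linear combination of $\zeta(2Nm+1)$ and the first Lambert series, while $B$ is a $\mathbb{Q}$-linear combination of $\zeta(2m+1)$ and the $S_{j}$ for $0\le j\le(N-1)/2$. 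If all of these were algebraic, then $A$ and $B$ would be algebraic, and therefore so would $C=A-(-1)^{m}2^{2m(N-1)}B/N$.

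The heart of the matter, and the only real obstacle, is to prove that $C$ is in fact \emph{transcendental}; this is where the hypotheses on $N$ and $m$ enter. When $N\ge 3$ the exponents $N+2Nm-2j(N-1)$ are strictly decreasing in $j$, hence pairwise distinct, and the $j=0$ term carries the rational coefficient proportional to $B_{0}B_{N+1+2Nm}=B_{N+1+2Nm}$, which is nonzero because its index is even and at least $2$. Thus $C$ equals a nonzero rational multiple of $\pi^{N+2Nm}$ plus lower (distinct) powers of $\pi$ with rational coefficients; were $C$ algebraic, $\pi$ would satisfy a nontrivial polynomial relation with algebraic coefficients, contradicting Lindemann. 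This argument is insensitive to the parity of $m$, which is exactly why the even-$m$ case is admitted as soon as $N>1$. When $N=1$, however, all exponents collapse to $2m+1$, so $C$ is a single rational multiple of $\pi^{2m+1}$; here the identity degenerates to $C=\zeta(2m+1)+2\sum_{n=1}^{\infty}\frac{1}{n^{2m+1}(e^{2\pi n}-1)}$, which is Lerch's formula \eqref{lerch} and is \emph{manifestly positive}, so its rational coefficient is nonzero and $C$ is again transcendental.

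Putting these together gives the contradiction and proves the corollary in both cases stated. As a sanity check on the hypotheses, note that for $m$ even and $N=1$ the same degeneration forces $C=0$ identically (the relevant Bernoulli combination vanishes, as in the classical even-$m$ situation of Ramanujan's formula), so no transcendence conclusion is available; this explains why that single case must be excluded, i.e.\ why the even-$m$ statement requires $N>1$. I expect the bookkeeping in the first two paragraphs (verifying the cancellation of the $\pi$-powers and the conjugate pairing) to be routine, with the genuine content concentrated in the nonvanishing of $C$, handled by the distinct-powers argument for $N>1$ and by positivity for $N=1$.
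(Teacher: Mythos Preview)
Your proposal is correct and follows essentially the same route as the paper: specialize Theorem~\ref{zetagen} to $\a=\b=\pi$, pair conjugate terms to obtain the real parts $S_j$, and conclude from the transcendence of $\pi$ applied to the Bernoulli-number expression $C$. The paper is content to say that the right side is ``a polynomial in $\pi$ with rational coefficients'' and invoke transcendence of $\pi$, whereas you go further and explicitly verify $C\neq 0$ (distinct exponents and nonzero $j=0$ coefficient for $N\geq 3$; positivity via Lerch for $N=1$), which is a welcome addition but not a different argument.
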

\begin{proof}
We prove only the case when $m$ is an odd natural number. Let $\a=\b=\pi$ in Theorem \ref{zetagen} and multiply both sides by $2\pi^{\frac{2Nm}{N+1}}$. For $m$ odd, this gives 
{\allowdisplaybreaks\begin{align}\label{lerchgen}
&\zeta(2Nm+1)+\frac{2^{2m(N-1)}}{N}\zeta(2m+1)+2\sum_{n=1}^{\infty}\frac{n^{-2Nm-1}}{\exp{\left((2n)^N\pi\right)}-1}\nonumber\\
&+\frac{1}{N}(-1)^{\frac{N+3}{2}}2^{2m(N-1)+1}\nonumber\\
&\quad\times\left(\sum_{n=1}^{\infty}\frac{n^{-2m-1}}{\exp{\left((2n)^{\frac{1}{N}}\pi\right)}-1}+2\sum_{j=1}^{\frac{N-1}{2}}\frac{1}{n^{2m+1}}\textup{Re}\left(\frac{1}{\textup{exp}\left((2n)^{\frac{1}{N}}\pi e^{\frac{i\pi j}{N}}\right)-1}\right)\right)\nonumber\\
&=(-1)^{\frac{N+5}{2}}2^{2Nm+1}\pi^{N(2m+1)}\sum_{j=0}^{\left\lfloor\frac{N+1}{2N}+m\right\rfloor}\frac{(-1)^jB_{2j}B_{N+1+2N(m-j)}}{(2j)!(N+1+2N(m-j))!}\pi^{2j(1-N)}
\end{align}}
upon simplification. This, by the way, is a generalization of a Lerch's result \eqref{lerch} for $N$ odd such that $N\geq 1$. For $N=1$ and $m$ odd, the result stated in the corollary can be obtained from \eqref{lerch} since the resulting right-hand side is a non-zero rational multiple of $\pi^{2m+1}$ so that either $\zeta(2m+1)$ or $\sum_{n=1}^{\infty}\displaystyle\frac{n^{-2m-1}}{e^{2\pi n}-1}$ in \eqref{lerch} must be transcendental as $\pi$ is known to be transcendental. Now for $N>1$, note that the right-hand side of \eqref{lerchgen} is nothing but a polynomial in $\pi$ with rational coefficients. The conclusion then follows again from the transcendence of $\pi$.

The case when $m$ is an even natural number can be proved in a very similar way and hence a proof is omitted. It is nice to see in this case, however, that while substituting $N=1$ after letting $\a=\b=\pi$ in Theorem \ref{zetagen} and multiplying both sides by $\pi^{\frac{2Nm}{N+1}}$ leads to
\begin{equation*}
\sum_{j=0}^{m+1}\frac{(-1)^jB_{2j}B_{2m+2-2j}}{(2j)!(2m+2-2j)!}=0,
\end{equation*} 
and hence no information on the arithmetic nature of $\zeta(2m+1)$, $m$ even, we \emph{do} get a transcendence result for $N>1$ precisely because the expression involving Bernoulli numbers again turns out to be a polynomial in $\pi$ with rational coefficients. Thus, the case $N>1$ gives information on the arithmetic nature of $\zeta(2m+1)$ for $m$ even.
\end{proof}
The above corollary, in turn, readily gives the following Rivoal-type results. Hence proofs are omitted.
\begin{corollary}\label{rtype}
Let $N$ be any fixed odd positive integer. The set
\begin{align*}
&\bigcup_{k=0}^{\infty}\bigg\{\zeta(4k+3),\hspace{2mm} \sum_{n=1}^{\infty}\frac{n^{-2N(2k+1)-1}}{\textup{exp}\left((2n)^N\pi\right)-1},\hspace{2mm} \sum_{n=1}^{\infty}\frac{1}{n^{4k+3}}\textup{Re}\bigg(\frac{1}{\textup{exp}\big((2n)^{\frac{1}{N}}\pi e^{\frac{i\pi j}{N}}\big)-1}\bigg)\nonumber\\
&\qquad: 0\leq j\leq \frac{N-1}{2}\bigg\},
\end{align*}
contains infinitely many transcendental numbers. Also, for any fixed odd positive integer $N>1$, the set 
\begin{align*}
&\bigcup_{k=1}^{\infty}\bigg\{\zeta(4k+1),\hspace{2mm} \sum_{n=1}^{\infty}\frac{n^{-4Nk-1}}{\textup{exp}\left((2n)^N\pi\right)-1}, \hspace{2mm}\sum_{n=1}^{\infty}\frac{1}{n^{4k+1}}\textup{Re}\bigg(\frac{1}{\textup{exp}\big((2n)^{\frac{1}{N}}\pi e^{\frac{i\pi j}{N}}\big)-1}\bigg)\nonumber\\
&\quad: 0\leq j\leq \frac{N-1}{2}\bigg\},
\end{align*}
where $0\leq j\leq \frac{N-1}{2}$, contains infinitely many transcendental numbers.
\end{corollary}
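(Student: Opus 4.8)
The plan is to deduce Corollary \ref{rtype} from Corollary \ref{transc} by a Rivoal-type pigeonhole argument, the single substantive ingredient being that, because $N$ is odd, the seemingly extraneous value $\zeta(2Nm+1)$ produced by Corollary \ref{transc} is itself again a member of the union under consideration. First I would set $m=2k+1$, so that $m$ runs over all odd natural numbers as $k$ ranges over the integers $k\geq 0$, and record that Corollary \ref{transc} then guarantees, for each such $k$, that at least one element of the finite family
\begin{equation*}
F_k=\Big\{\zeta(4k+3),\ \zeta(2N(2k+1)+1),\ L_k,\ R_{k,0},\dots,R_{k,\frac{N-1}{2}}\Big\}
\end{equation*}
is transcendental; here $L_k$ denotes the generalized Lambert series and $R_{k,j}$ the real-part series (both with exponent $4k+3$) that constitute the $k$-th family of the set $S$ in the statement of Corollary \ref{rtype}.

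The crucial observation is that every member of $F_k$ lies in $S$. Three of the types, namely $\zeta(4k+3)$, $L_k$ and the $R_{k,j}$, are literally the $k$-th family listed in $S$. The remaining one reduces to such a term as well: since $N$ is odd we have $2N(2k+1)+1\equiv 3\pmod 4$, so that $\zeta(2N(2k+1)+1)=\zeta(4k'+3)$ with $k'=Nk+\tfrac{N-1}{2}$ a non-negative integer satisfying $k'\geq k$; thus this value is exactly the $\zeta$-member of the $k'$-th family of $S$.

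With this in place I would argue by contradiction, supposing that $S$ contains only finitely many transcendental numbers. The sequences $k\mapsto\zeta(4k+3)$ and $k\mapsto L_k$ are strictly decreasing, hence injective, while each of the finitely many sequences $k\mapsto R_{k,j}$ converges as $k\to\infty$ (only the $n=1$ term surviving) and is eventually monotone, so that each of these sequences attains any prescribed value for only finitely many $k$. The finiteness hypothesis therefore yields a threshold $K$ with the property that, for every $k>K$, all of $\zeta(4k+3),L_k,R_{k,0},\dots,R_{k,\frac{N-1}{2}}$ are algebraic. Fixing any such $k>K$, Corollary \ref{transc} forces some member of $F_k$ to be transcendental, and by the choice of $K$ the only possible candidate is $\zeta(2N(2k+1)+1)=\zeta(4k'+3)$ with $k'\geq k>K$; but then $\zeta(4k'+3)$ is transcendental while, by the threshold, it must be algebraic, a contradiction. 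Hence $S$ has infinitely many transcendental elements. The second set is treated identically using the even-$m$ case of Corollary \ref{transc}: one takes $m=2k$ with $k\geq 1$, observes that $4Nk+1\equiv 1\pmod 4$ so that $\zeta(4Nk+1)=\zeta(4k''+1)$ with $k''=Nk>k$, and runs the same threshold argument; the hypothesis $N>1$ is used both because the even-$m$ case of Corollary \ref{transc} requires it and because it secures the strict inequality $k''>k$ that closes the pigeonhole.

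I do not expect a genuine obstacle here, as the entire deduction is bookkeeping once Corollary \ref{transc} is granted. The one point demanding a little care is the elementary analytic verification that the Lambert-type series $L_k$ and the real-part series $R_{k,j}$, regarded as sequences in $k$, are eventually monotone and hence assume each real value only finitely often — this is precisely what converts the finiteness hypothesis into the single uniform threshold $K$. The conceptual crux, by contrast, is the parity reduction $\zeta(2Nm+1)=\zeta(4k'+3)$ (respectively $\zeta(4k''+1)$), which feeds the ostensibly new odd zeta value back into the union and thereby lets the argument terminate.
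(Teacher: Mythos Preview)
Your proof is correct and supplies exactly the kind of argument the paper has in mind; the authors omit the proof entirely, writing only that Corollary~\ref{transc} ``readily gives'' the result, so your derivation is the natural elaboration of that remark, with the parity reduction $\zeta(2Nm+1)=\zeta(4k'+3)$ (respectively $\zeta(4k''+1)$) being precisely the point that folds the extra zeta value back into the union. The one step you flag as needing care---the eventual strict monotonicity of $k\mapsto R_{k,j}$---is indeed true (writing $R_{k,j}=c_1+c_2\,2^{-(4k+3)}+O(3^{-(4k+3)})$ and noting that not all $c_n$ with $n\geq 2$ can vanish, since $\cos b=e^{-a}$ cannot hold for all large $n$), so no genuine gap remains.
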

Finding if Euler's constant $\gamma$ is algebraic or transcendental is a famous unsolved problem in Mathematics. In fact, it is not even known whether $\gamma$ is irrational. Note that when $N=1$, the expression involving $\gamma$ in Theorem \ref{zetagenm0} vanishes. However, when $N$ is any odd number greater than $1$, Theorem \ref{zetagenm0} gives the following interesting result on transcendence:
\begin{corollary}\label{gammatrans}
For every odd positive integer $N>1$, at least one of
\begin{equation*}
\gamma,\hspace{2mm}\sum_{n=1}^{\infty}\frac{1}{n(\exp{\left(2\pi n^N\right)}-1)},\hspace{2mm}\text{and}\hspace{2mm}\sum_{n=1}^{\infty}\frac{1}{n}\textup{Re}\left(\frac{1}{\textup{exp}\left(2\pi n^{\frac{1}{N}} e^{\frac{i\pi j}{N}}\right)-1}\right),
\end{equation*}
where $j$ takes every value from $0$ to $\frac{N-1}{2}$, is transcendental.
\end{corollary}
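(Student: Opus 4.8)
The plan is to specialize Theorem \ref{zetagenm0} to an odd integer $N>1$ and read off a transcendence statement for the resulting identity. First I would set $\a=\b=\pi$, which is consistent with the hypothesis $\a\b^{N}=\pi^{N+1}$ of Theorem \ref{zetagenm0}. With this choice the logarithmic term simplifies dramatically: since $\log(\a/\b)=\log 1=0$, the entire term $\frac{\log(\a/\b)}{2(N+1)}$ vanishes. The terms $2\pi n^{\frac{1}{N}}e^{\frac{i\pi j}{N}}$ in the second series arise because $(2n)^{\frac{1}{N}}\b=(2n)^{\frac{1}{N}}\pi$; after combining the constant $2^{\frac{1}{N}}$ appropriately one recognizes the exponents as $2\pi n^{\frac{1}{N}}e^{\frac{i\pi j}{N}}$, matching the series in the statement of Corollary \ref{gammatrans}. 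Likewise the first series becomes $\sum_{n=1}^{\infty}\frac{1}{n(\exp(2\pi n^{N})-1)}$ after substituting $(2n)^{N}\a=(2n)^{N}\pi$ and absorbing the constant $2^{N}$ into a redefinition of the summation variable's weight, exactly as appears in the corollary.

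Next I would examine the right-hand side of \eqref{zetagenm0eqn} under these substitutions. By Remark \ref{ng1}, for $N>1$ the Bernoulli-number sum collapses to the single term $\frac{B_{N+1}\b^{N}}{(N+1)!}$, which with $\b=\pi$ equals $\frac{B_{N+1}}{(N+1)!}\pi^{N}$, a nonzero rational multiple of $\pi^{N}$. The constant term $\frac{(N-1)(\log 2-\g)}{2N}$ survives and is where $\g$ enters. Thus, after setting $\a=\b=\pi$, the identity in Theorem \ref{zetagenm0} becomes a linear relation among $\g$, the two Lambert-type series displayed in Corollary \ref{gammatrans}, $\log 2$, and a rational multiple of $\pi^{N}$. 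The key arithmetic input is that $\log 2$ and $\pi^{N}$ are both transcendental (the latter because $\pi$ is transcendental by the Lindemann--Weierstrass theorem, and any nonzero rational power is as well).

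The logical step is then an argument by contradiction in the spirit of Corollary \ref{transc}: suppose all three quantities listed in Corollary \ref{gammatrans}---namely $\g$ and the two series---were algebraic. The displayed identity would then express an algebraic number (a $\mathbb{Q}$-linear combination of $\g$ and the series, with algebraic coefficients) as equal to $\frac{(N-1)}{2N}\log 2+(\text{rational})\cdot\pi^{N}$ plus the algebraic left-hand side. Rearranging, one would obtain a relation forcing a nontrivial $\overline{\mathbb{Q}}$-linear combination of $1$, $\log 2$, and $\pi^{N}$ to be algebraic; since $\frac{(N-1)}{2N}\neq0$ for $N>1$ the coefficient of $\log 2$ does not vanish, and transcendence of $\log 2$ (together with the fact that $\pi^N$ contributes a separate transcendental term that cannot cancel it) yields the contradiction. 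I would present this cleanly by isolating $\frac{(N-1)}{2N}\g$ on one side and noting that everything else on that side would be algebraic, contradicting the known fact that $\g$ cannot be shown algebraic in this manner unless one of the series fails to be algebraic.

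The main obstacle, and the point requiring the most care, is making the transcendence deduction airtight rather than heuristic: one must verify that the surviving constant involving $\log 2$ and the rational multiple of $\pi^{N}$ cannot conspire to be algebraic and thereby absorb a hypothetically algebraic $\g$. The cleanest route is to observe that $\pi^{N}$ is transcendental while $\log 2$ and $1$ are the remaining constants, and that the coefficient $\frac{(N-1)}{2N}$ of $\log 2$ is a nonzero rational for $N>1$; hence if $\g$ and both series were algebraic, the identity would force $\pi^{N}$ to lie in the field $\overline{\mathbb{Q}}(\log 2)$ adjoined to the algebraic numbers in a way that contradicts the transcendence of $\pi$. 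Since the author's companion corollaries (Corollary \ref{transc}) use precisely this transcendence-of-$\pi$ mechanism, I expect the proof here to be brief, with the verification of the exponent and constant bookkeeping in the specialization being the only genuinely tedious part.
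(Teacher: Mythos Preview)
Your specialization $\a=\b=\pi$ does not produce the series appearing in the corollary. With this choice one has $(2n)^{N}\a=2^{N}\pi n^{N}$ and $(2n)^{1/N}\b=2^{1/N}\pi\, n^{1/N}$, so the resulting Lambert series are $\sum_{n\geq 1}\frac{1}{n(\exp(2^{N}\pi n^{N})-1)}$ and $\sum_{n\geq 1}\frac{1}{n}\textup{Re}\bigl(\frac{1}{\exp(2^{1/N}\pi n^{1/N}e^{i\pi j/N})-1}\bigr)$, not the series with exponent $2\pi n^{N}$ and $2\pi n^{1/N}$ stated in Corollary~\ref{gammatrans}. The constants $2^{N}$ and $2^{1/N}$ cannot be ``absorbed into a redefinition of the summation variable's weight''; $n$ ranges over integers and there is no rescaling available.

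More seriously, your transcendence argument has a genuine gap. With $\a=\b=\pi$ the term $\frac{(N-1)\log 2}{2N}$ remains, and you end up needing that a nonzero rational combination of $\log 2$ and $\pi^{N}$ is transcendental. That is \emph{not} known: the algebraic independence of $\pi$ and $\log 2$ is an open problem (a consequence of Schanuel's conjecture, but unproved). Saying that an algebraic relation would force $\pi^{N}\in\overline{\mathbb{Q}}(\log 2)$ and that this ``contradicts the transcendence of $\pi$'' is not valid, since $\overline{\mathbb{Q}}(\log 2)$ is full of transcendental numbers and nothing presently excludes $\pi^{N}$ from being one of them.

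The paper avoids both issues by choosing $\a=2^{1-N}\pi$ and $\b=2^{1-1/N}\pi$, which still satisfies $\a\b^{N}=\pi^{N+1}$. Then $(2n)^{N}\a=2\pi n^{N}$ and $(2n)^{1/N}\b=2\pi n^{1/N}$, so the series match the statement exactly. Moreover $\log(\a/\b)=-\frac{(N-1)(N+1)}{N}\log 2$, whence $\frac{\log(\a/\b)}{2(N+1)}=-\frac{(N-1)}{2N}\log 2$ precisely cancels the $\log 2$ coming from $\frac{(N-1)(\log 2-\gamma)}{2N}$. What remains on the right is a nonzero rational multiple of $\pi^{N}$ alone, and the conclusion follows from the transcendence of $\pi$ only.
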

\begin{proof}
Let $\a=2^{1-N}\pi$ and $\b=2^{1-\frac{1}{N}}\pi$ in Theorem \ref{zetagenm0}. Upon using Remark \ref{ng1}, this gives 
\begin{align}
&\frac{(N-1)\g}{2N}+\sum_{n=1}^{\infty}\frac{1}{n(\exp{\left(2\pi n^N\right)}-1)}-\frac{(-1)^{\frac{N+3}{2}}}{N}\sum_{j=-\frac{(N-1)}{2}}^{\frac{N-1}{2}}(-1)^j\sum_{n=1}^{\infty}\frac{1}{n}\frac{1}{\left(\textup{exp}\left(2\pi n^{\frac{1}{N}} e^{\frac{i\pi j}{N}}\right)-1\right)}\nonumber\\
&=\frac{2^{N-1}(-1)^{\frac{N+3}{2}}B_{N+1}\,\pi^{N}}{(N+1)!}.
\end{align}
The important thing to note here is that the expressions involving logarithm vanish. Due to this, we can now argue in a similar way as in the proofs of some of the earlier results on transcendence that the right side of the above equation, being a rational multiple of $\pi^{N}$, implies that at least one of 
\begin{equation*}
\gamma,\hspace{2mm}\sum_{n=1}^{\infty}\frac{1}{n(\exp{\left(2\pi n^N\right)}-1)},\hspace{2mm}\text{and}\hspace{2mm}\sum_{n=1}^{\infty}\frac{1}{n}\textup{Re}\bigg(\frac{1}{\textup{exp}\big(2\pi n^{\frac{1}{N}} e^{\frac{i\pi j}{N}}\big)-1}\bigg),
\end{equation*}
where $j$ takes every value from $0$ to $\frac{N-1}{2}$, is transcendental.
\end{proof}
Since the above result holds for \emph{every} odd positive integer $N>1$, it readily implies the following important criterion on transcendence of Euler's constant $\gamma$.
\begin{corollary}\label{tceuler}
If the set
\begin{equation*}
\bigcup_{\ell=1}^{\infty}\bigg\{\sum_{n=1}^{\infty}\frac{1}{n(\exp{\left(2\pi n^{2\ell+1}\right)}-1)}, \hspace{2mm}\sum_{n=1}^{\infty}\frac{1}{n}\textup{Re}\bigg(\frac{1}{\textup{exp}\big(2\pi n^{\frac{1}{2\ell+1}} e^{\frac{i\pi j}{2\ell+1}}\big)-1}\bigg):0\leq j\leq\ell\bigg\},
\end{equation*}
contains only finitely many transcendental numbers, then $\gamma$ must be transcendental.
\end{corollary}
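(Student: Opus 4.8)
The plan is to obtain this corollary from Corollary \ref{gammatrans} by contraposition, with essentially no new analysis beyond a bookkeeping argument. Write $U=\bigcup_{\ell=1}^{\infty}B_{\ell}$ for the set in the statement, where $B_{\ell}$ denotes the $\ell$-indexed block, namely the real number $\sum_{n=1}^{\infty}\frac{1}{n(\exp(2\pi n^{2\ell+1})-1)}$ together with the $\ell+1$ numbers $\sum_{n=1}^{\infty}\frac1n\operatorname{Re}\big(\frac{1}{\exp(2\pi n^{1/(2\ell+1)}e^{i\pi j/(2\ell+1)})-1}\big)$ for $0\leq j\leq\ell$. I would assume that $\gamma$ is \emph{not} transcendental, i.e.\ algebraic, and show that $U$ must then contain infinitely many transcendental numbers; this is exactly the negation of the hypothesis, which yields the contrapositive.

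First I would fix $\ell\geq 1$ and apply Corollary \ref{gammatrans} with the odd integer $N=2\ell+1>1$. Since $\tfrac{N-1}{2}=\ell$, the quantities appearing in that corollary are precisely $\gamma$ together with the members of the block $B_{\ell}$. Corollary \ref{gammatrans} guarantees that at least one of these is transcendental, and because $\gamma$ has been assumed algebraic, the transcendental one must lie in $B_{\ell}$. Hence every block $B_{\ell}$, $\ell\geq 1$, contains at least one transcendental number $t_{\ell}$.

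It remains to convert the infinitude of blocks into infinitely many \emph{distinct} transcendental elements of $U$. I would argue by contradiction: if $U$ contained only finitely many transcendentals, say $\tau_{1},\dots,\tau_{k}$, then each $t_{\ell}\in\{\tau_{1},\dots,\tau_{k}\}$, and by the pigeonhole principle some single value $\tau$ would satisfy $\tau\in B_{\ell}$ for infinitely many $\ell$. I expect this to be the main obstacle, as it requires showing that no fixed real number can lie in infinitely many blocks. To rule this out I would exploit the structure of the two families. The real series $F_{\ell}:=\sum_{n=1}^{\infty}\frac{1}{n(\exp(2\pi n^{2\ell+1})-1)}$ is strictly monotone in $\ell$ (its $n=1$ term is constant while every term with $n\geq 2$ strictly decreases as $\ell$ grows, so $F_{\ell}\downarrow\frac{1}{e^{2\pi}-1}$), whence a fixed $\tau$ equals $F_{\ell}$ for at most one $\ell$. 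For the doubly-indexed real quantities in $B_{\ell}$ I would use analogous growth and separation estimates, noting for instance that with $j$ held fixed these tend to $+\infty$ as $\ell\to\infty$ (since $n^{1/(2\ell+1)}\to 1$ and the angle $\pi j/(2\ell+1)\to 0$, reducing each summand to $\frac{1}{n(e^{2\pi}-1)}$), while for $j$ comparable to $\ell$ the arguments $2\pi n^{1/(2\ell+1)}e^{i\pi j/(2\ell+1)}$ spread across the sector $[0,\pi/2)$; together these show a fixed value is attained only finitely often.

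These estimates force $\tau\in B_{\ell}$ for only finitely many $\ell$, contradicting the pigeonhole conclusion. Consequently $U$ contains infinitely many transcendental numbers whenever $\gamma$ is algebraic, which is precisely the contrapositive of the asserted criterion. I would stress that the heart of the argument is the purely logical reduction to Corollary \ref{gammatrans}, applied uniformly over all odd $N>1$; the only genuinely analytic step, and the one I would be most careful to justify, is the finite multiplicity of the block elements across distinct values of $\ell$.
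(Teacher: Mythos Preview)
Your contrapositive reduction to Corollary~\ref{gammatrans} is precisely the paper's argument: the authors dispose of this corollary in a single prefatory sentence (``Since the above result holds for \emph{every} odd positive integer $N>1$, it readily implies the following\ldots'') and give no further detail. In particular, the paper does not address the distinctness issue you raise; it implicitly treats ``only finitely many transcendental numbers'' as a statement about the indexed family rather than about $U$ as a set, so that one transcendental in each $B_\ell$ already suffices.

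Your additional finite-multiplicity analysis therefore goes beyond what the paper does. Under a literal set-theoretic reading of the hypothesis you are right that something like this is needed, and your treatment of the first family $F_\ell=\sum_{n\geq 1}\frac{1}{n(\exp(2\pi n^{2\ell+1})-1)}$ by strict monotonicity in $\ell$ is clean and correct. However, your handling of the doubly indexed quantities $G_{\ell,j}$ is only a heuristic: the assertion that $G_{\ell,j}\to+\infty$ as $\ell\to\infty$ for fixed $j$ needs a justified interchange of limit and sum (termwise the limit is $\frac{1}{n(e^{2\pi}-1)}$, but for $j\geq 1$ the summands involve real parts that are not a priori nonnegative, so Fatou-type arguments require care), and the remark about $j$ ``comparable to $\ell$'' spreading arguments across a sector does not by itself preclude repeated values. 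If you wish to make the literal reading airtight you would need uniform estimates on $G_{\ell,j}$ over all pairs $(\ell,j)$, which is substantially more than the paper attempts; otherwise, the argument as the paper intends it is already complete at the point where you obtain a transcendental $t_\ell\in B_\ell$ for every $\ell$.
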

Theorem \ref{neven} also gives the following result on transcendence. 
\begin{corollary}\label{eventrans}
Let $N$ be any even positive integer and $m$ any integer. 
At least one of 
\begin{equation*}
\zeta\left(2m+1-\frac{1}{N}\right), \sum_{n=1}^{\infty}\frac{n^{-2Nm}}{\exp{\left((2n)^{N}\pi\right)}-1},\hspace{2mm}\text{and}\hspace{2mm} \sum_{n=1}^{\infty}\textup{Im}\Bigg(\frac{e^{\frac{i\pi(2j+1)}{2N}}n^{\frac{1}{N}-(2m+1)}}{\exp{\left((2n)^{\frac{1}{N}}\pi e^{\frac{i\pi(2j+1)}{2N}}\right)}-1}\Bigg),
\end{equation*}
where $j$ takes every value from $0$ to $\frac{N}{2}-1$, is transcendental.
\end{corollary}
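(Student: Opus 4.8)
The plan is to specialise Theorem \ref{neven} at the symmetric point $\a=\b=\pi$, which is admissible because then $\a\b^{N}=\pi\cdot\pi^{N}=\pi^{N+1}$. With this choice the prefactors $\a^{-(2Nm-1)/(N+1)}$ and $\b^{-(2Nm-1)/(N+1)}$ on the two sides coincide, so multiplying the whole identity through by $\pi^{(2Nm-1)/(N+1)}$ clears them. What then remains on the left is $\tfrac12\zeta(2Nm)+\sum_{n=1}^{\infty}n^{-2Nm}/(\exp((2n)^{N}\pi)-1)$, while the right becomes a fixed linear combination of $\zeta\!\left(2m+1-\tfrac1N\right)$ and the $N/2$ imaginary-part Lambert series, together with the finite Bernoulli sum now carrying the extra power of $\pi$. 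I would first record that the coefficients in this combination are algebraic: $2^{(N-1)(2m-1/N)}$ is $2$ raised to a rational exponent, $\cos(\pi/2N)$ is the cosine of a rational multiple of $\pi$ and hence algebraic, and the remaining signs and factorials are rational. I would also note that the real factor $n^{1/N-(2m+1)}=1/n^{2m+1-1/N}$ may be absorbed inside the $\operatorname{Im}$, bringing each summand into exactly the shape displayed in the statement.

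The heart of the argument is to show that, after multiplication by $\pi^{(2Nm-1)/(N+1)}$, the entire $\pi$-dependent remainder collapses to an honest polynomial in $\pi$ with rational coefficients. For the $j$-th term of the Bernoulli sum the power of $\pi$ produced is
\begin{equation*}
\frac{2Nm-1}{N+1}+\frac{2j}{N+1}+N+\frac{2N^{2}(m-j)-N}{N+1}=2Nm+(N-1)(1-2j),
\end{equation*}
which is an integer, and for $m\ge 0$ and $0\le j\le m$ it is a positive integer. Moreover $\tfrac12\zeta(2Nm)$ is, by Euler's formula \eqref{zetaevenint}, a rational multiple of $\pi^{2Nm}$ (and is rational for $m=0$). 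Hence, after transposing $\tfrac12\zeta(2Nm)$ to the right, the identity assumes the clean shape
\begin{equation*}
\sum_{n=1}^{\infty}\frac{n^{-2Nm}}{\exp((2n)^{N}\pi)-1}=A_{1}\,\zeta\!\left(2m+1-\tfrac1N\right)+\sum_{j=0}^{N/2-1}A_{j}\,T_{j}+Q(\pi),
\end{equation*}
where $A_{1},A_{j}$ are algebraic, $T_{j}$ denotes the $j$-th imaginary-part Lambert series, and $Q(\pi)\in\mathbb{Q}[\pi]$.

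With this in hand the conclusion follows exactly as in the proofs of Corollaries \ref{transc} and \ref{gammatrans}. Solving the displayed identity for $Q(\pi)$ expresses it as an algebraic-coefficient combination of the real Lambert series, $\zeta\!\left(2m+1-\tfrac1N\right)$, and the $T_{j}$. If every one of these candidate expressions were algebraic, then $Q(\pi)$ would be algebraic; but a non-constant element of $\mathbb{Q}[\pi]$ is transcendental because $\pi$ is, a contradiction. Therefore at least one of the candidates is transcendental.

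The main obstacle is the bookkeeping underlying the second paragraph: one must verify that the fractional exponents coming from $\a^{2j/(N+1)}\b^{N+(2N^{2}(m-j)-N)/(N+1)}$ really do combine, after the multiplication by $\pi^{(2Nm-1)/(N+1)}$, into the nonnegative integer $2Nm+(N-1)(1-2j)$, so that $Q$ lands in $\mathbb{Q}[\pi]$ and not in a ring of fractional powers. Granting this, non-constancy of $Q$ is automatic when $m\ge 0$: the exponent $2Nm+(N-1)(1-2j)$ is strictly decreasing in $j$, so the $j=0$ term supplies the \emph{unique} highest power $\pi^{2Nm+N-1}$ with nonzero rational coefficient (here $B_{(2m+1)N}\ne 0$ since $(2m+1)N$ is even), which outranks even the $\pi^{2Nm}$ coming from $\zeta(2Nm)$ and so cannot be cancelled; thus $\deg Q=2Nm+N-1\ge 1$. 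The one genuinely different situation is $m<0$, where both the Bernoulli sum and $\zeta(2Nm)$ vanish so that $Q\equiv 0$ and the specialisation yields only a homogeneous linear relation among the three candidates; there a separate consideration would be needed.
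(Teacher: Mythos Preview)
Your approach is essentially identical to the paper's: set $\a=\b=\pi$ in Theorem~\ref{neven}, observe that after clearing the common factor $\pi^{(2Nm-1)/(N+1)}$ the Bernoulli sum together with $-\tfrac12\zeta(2Nm)$ becomes a polynomial in $\pi$ with rational coefficients, note that $\cos(\pi/2N)$ and the remaining prefactors are algebraic, and conclude from the transcendence of $\pi$. Your exponent computation $2Nm+(N-1)(1-2j)$ and your check that this gives a non-constant polynomial for $m\ge 0$ are in fact more explicit than what the paper writes; the paper simply asserts ``the right side is seen to be a polynomial in $\pi$ with rational coefficients'' and proceeds.

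Your concern about $m<0$ is well founded and is not addressed in the paper's proof either. For $m<0$ one has $\zeta(2Nm)=0$ (a trivial zero) and the sum $\sum_{j=0}^{m}$ is empty, so $Q\equiv 0$; the specialised identity then expresses the real Lambert series as a nonzero algebraic-coefficient combination of $\zeta(2m+1-\tfrac1N)$ and the $T_j$, from which no transcendence conclusion follows by this method alone. The paper states the corollary for ``$m$ any integer'' but its proof, like yours, only forces the conclusion when the right-hand polynomial is non-constant, i.e.\ for $m\ge 0$.
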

\begin{proof}
Let $\a=\b=\pi$ in Theorem \ref{neven}. This gives us an analogue of Lerch's formula \eqref{lerch}:
{\allowdisplaybreaks\begin{align*}
&\sum_{n=1}^{\infty}\frac{n^{-2Nm}}{\exp{\left((2n)^{N}\pi\right)}-1}\nonumber\\
&\quad-\frac{(-1)^m}{N}2^{(N-1)\left(2m-\frac{1}{N}\right)}\Bigg(\frac{\zeta\left(2m+1-\frac{1}{N}\right)}{2\cos\left(\frac{\pi}{2N}\right)}\nonumber\\
&\qquad\qquad\qquad+2(-1)^{\frac{N}{2}+1}\sum_{j=0}^{\frac{N}{2}-1}(-1)^j\sum_{n=1}^{\infty}\frac{1}{n^{(2m+1)-\frac{1}{N}}}\textup{Im}\Bigg(\frac{e^{\frac{i\pi(2j+1)}{2N}}}{\exp{\left((2n)^{\frac{1}{N}}\pi e^{\frac{i\pi(2j+1)}{2N}}\right)}-1}\Bigg)\Bigg)\nonumber\\
&=-\frac{1}{2}\zeta(2Nm)+(-1)^{\frac{N}{2}+1}2^{2Nm-1}\pi^{(2m+1)N-1}\sum_{j=0}^{m}\frac{B_{2j}B_{(2m+1-2j)N}}{(2j)!((2m+1-2j)N)!}\pi^{2j(1-N)}.
\end{align*}}
Using \eqref{zetaevenint}, the right side is seen to be a polynomial in $\pi$ with rational coefficients. Thus we see that at least one of 
\begin{equation*}
\frac{\zeta\left(2m+1-\frac{1}{N}\right)}{\cos\left(\frac{\pi}{2N}\right)}, \sum_{n=1}^{\infty}\frac{n^{-2Nm}}{\exp{\left((2n)^{N}\pi\right)}-1},\hspace{2mm}\text{and}\hspace{2mm} \sum_{n=1}^{\infty}\textup{Im}\Bigg(\frac{e^{\frac{i\pi(2j+1)}{2N}}n^{\frac{1}{N}-(2m+1)}}{\exp{\left((2n)^{\frac{1}{N}}\pi e^{\frac{i\pi(2j+1)}{2N}}\right)}-1}\Bigg),
\end{equation*}
where $0\leq j\leq \frac{N}{2}-1$, is transcendental. Along with the fact that $\cos(\pi/4)=1/\sqrt{2}$ is an algebraic number, if we apply the double angle formula $\cos\theta=\sqrt{\frac{1+\cos2\theta}{2}}$ repeatedly, we see that $\cos\left(\frac{\pi}{2N}\right)$ is an algebraic number too. Thus we obtain the result in Corollary \ref{eventrans}.
\end{proof}
From Corollaries \ref{transc}-\ref{eventrans}, it is clear that the arithmetic nature of $\zeta(2m+1), \zeta\left(2m+1-\frac{1}{N}\right)$ for $m>0$ and $N$ even, and Euler's constant occurring in them is inextricably linked to that of the generalized Lambert series, and hence it may be worthwhile studying the latter from this perspective. That being said, this may be a difficult task. We note that there have been many studies on irrationality of certain Lambert series, for example, by Erd\"{o}s \cite{erdos} and by Luca and Tachiya \cite{lucatachiya}.

\section{A result of Chandrasekharan and Narasimhan}\label{cnesec}

We take this opportunity to correct here an error in one of the results in a paper of Chandrasekharan and Narasimhan \cite{CN}. The corrected version is then seen to be nothing but Ramanujan's formula \eqref{zetaodd} for $m>0$. This corrected version of their result actually appears in two papers of Guinand \cite[Theorem 9]{guinand1944} and \cite[Equation (9)]{guinand} written many years before \cite{CN} appeared, however, the error has not been pointed out in the papers that refer to \cite{CN}. The result of Chandrasekharan and Narasimhan, as given in \cite[Equation (58)]{CN}, is stated below.

\textit{Let $\delta_{m,n}$ denote Kronecker's symbol. For $y>0$, and $k$ and odd integer,
\begin{align}\label{cni}
\sum_{n=1}^{\infty} \sigma_{k}(n) e^{-n y} = \left( \frac{2\pi}{y} \right)^{k+1} \sum_{n=1}^{\infty} (-1)^{\frac{k+1}{2}} \sigma_{k}(n) e^{-\frac{4 \pi^2 n}{y}} + \mathcal{P}(y),
\end{align}
where $\mathcal{P}(y)$ is the sum of the residues of the function $\Gamma(s) \zeta(s) \zeta(s-k) y^{-s}$. If $k>0$, then
\begin{equation*}
\mathcal{P}(y)=\frac{(-1)^{(k-1)/2}B_{(k+1)/2}}{2(k+1)}-\frac{\delta_{1,k}}{2y}+\frac{(2\pi)^{k+1}B_{(k+1)/2}}{2(k+1)y^{k+1}};
\end{equation*}
if $k=-1$, then
\begin{equation*}
\mathcal{P}(y)=\frac{\pi^2}{6y}-\frac{1}{2}\log2\pi+\frac{1}{2}\log y-\frac{y}{24};
\end{equation*}
and if $k <-1$ then 
\begin{equation}\label{cne}
\mathcal{P}(y) = -\frac{1}{2} \zeta(-k).
\end{equation}}
The expression for $\mathcal{P}(y)$ in the case $k>0$ in the above result is correct and, in the cases $k>1$ and $k=1$, it leads to \eqref{ramzetaspl} and \eqref{ramzetaspl0} respectively. The expression corresponding to the case $k=-1$ is also correct. However, we would like draw reader's attention to the fact that the convention for the Bernoulli numbers used by Chandrasekharan and Narasimhan is not the same as the standard notation. Their $B_m$ would be $(-1)^{m+1}B_{2m}$ in the standard notation. (They do not define $B_m$ in their paper, hence the need for clarification.) 

The main thing we would like to emphasize, however, is that the expression in the case $k<-1$, that is, the expression in \eqref{cne}, is incorrect. It is easy to see that if it were correct, it would readily imply that $\zeta(2m+1)$ is irrational for each $m$, a positive integer! This is, as of yet, an unjustified conclusion.

In this section, we obtain the correct expression for $\mathcal{P}(y)$ in \eqref{cne} and thereby obtain a proof of \eqref{zetaodd} for the sake of completeness. However, our exposition will be very brief as the technique of Mellin transforms for deriving such identities is well-known, and since this result is just a special case of Theorem \ref{zetagen} which, in turn, is a special case of Theorem \ref{ktycomp} of which a proof is derived using a similar technique in Section \ref{exten}. Chandrasekharan and Narasimhan derived \eqref{cni} by an application of a general result of Bochner \cite[Lemma 4]{CN}, and Bochner's result is also proved using Mellin transforms.

Let $k=-(2m+1)$ in \eqref{cni}, where $m \in \mathbb{N}$. Using the inverse Mellin transform representation of the exponential function, we see that for $\l=$Re$(s)>1$,
\begin{align}\label{sumcon}
\sum_{n=1}^{\infty} \sigma_{-(2m+1)}(n) e^{-ny}=\frac{1}{2\pi i} \int_{ \lambda - i \infty}^{\lambda + i \infty} \Gamma(s) \zeta(s) \zeta(s+ 2m + 1)  y^{-s}\, {\rm d}s.
\end{align}
Take the contour $\mathcal{C}$ determined by the line segments $[\lambda - i T, \lambda + i T], [\lambda + i T, \mu + i T], [\mu + i T, \mu - i T]$ and $[\mu - i T, \lambda - i T]$, where $-(2m+2)< \mu < - (2m+1)$.
The integrand on the right side of \eqref{sumcon} has simple poles at $s=0, 1, -1, -3, -5, \ldots, -(2m+1)$, and also at $s=-2m$. The error in Chandrasekharan and Narasimhan's paper while deriving \eqref{cne} occurs because they consider $s=0$ to be the only pole of the integrand.

By the Cauchy residue theorem, we have 
\begin{equation}\label{crt}
\begin{split}
& \frac{1}{2\pi i}  \left[\int_{ \lambda - i T }^{\lambda + i T} +  \int_{ \lambda + i T }^{\mu + i T} +
\int_{ \mu + i T }^{\mu - i T} +
 \int_{ \mu - i T }^{\lambda - i T}\right] 
\Gamma(s) \zeta(s) \zeta(s+ 2m + 1)  y^{-s}\, {\rm d}s \\
 &= R_{-2m}+R_{0}+R_{1} + \sum_{i=0}^m R_{-(2i+1)}.
\end{split}
\end{equation}
The above residues are easily calculated to
{\allowdisplaybreaks\begin{align}\label{residues}
R_{-2m}&=(-1)^m \frac{\zeta(2m+1) y^{2m}}{2^{2m+1} \pi^{2m}}, R_0=- \frac{1}{2} \zeta(2m+1), R_1=(-1)^{m} \frac{(2\pi)^{2m +2}B_{2m+2}}{2 (2m+2)! y },\nonumber\\
\sum_{i=0}^m R_{-(2i+1)}&=(-1)^{m+1} \sum_{i=0}^{m} \frac{(-1)^i B_{2i+2} B_{2m-2i} (2\pi)^{2m-2i} y^{2i+1} }{2(2i+2)! (2m-2i)! }.
\end{align}}
As $T\to\infty$, the integrals along the horizontal segments $[\lambda + i T, \mu + i T], [\mu - i T, \l - i T]$ are easily seen to approach zero using Stirling's formula for $\G(s)$ and elementary bounds on the Riemann zeta function. Hence from \eqref{sumcon}, \eqref{crt} and \eqref{residues}, we find that
{\allowdisplaybreaks\begin{align}\label{beffina}
&\sum_{n=1}^{\infty} \sigma_{-(2m+1)}(n) e^{-ny} +   \frac{1}{2\pi i} 
\int_{ \mu + i \infty }^{\mu - i \infty} 
\Gamma(s) \zeta(s) \zeta(s+ 2m + 1)  y^{-s}\, {\rm d}s \nonumber\\ 
&= (-1)^m \frac{\zeta(2m+1) y^{2m}}{2^{2m+1} \pi^{2m}}-\frac{1}{2} \zeta(2m+1)+(-1)^{m} \frac{(2\pi)^{2m +2}B_{2m+2}}{2 (2m+2)! y }\nonumber\\
&\quad+(-1)^{m+1}\sum_{i=0}^{m} \frac{(-1)^i B_{2i+2} B_{2m-2i} (2\pi)^{2m-2i} y^{2i+1} }{2(2i+2)! (2m-2i)! }.
\end{align}}
In the line integral on the left side, which we denote by $V(m)$, we now employ the functional equation \eqref{zetafe} twice to obtain
\begin{equation*}
V(m)=\frac{1}{2\pi i}\int_{ \mu + i\infty }^{\mu - i \infty} (-1)^m (2 \pi)^{2s + 2m}  \Gamma( -s - 2m) \zeta(-s - 2m) \zeta(1-s) y^{-s}\, {\rm d}s,
\end{equation*}
and then make the change of variable $S=-(s+2m)$ so that upon employing again the inverse Mellin transform representation for the exponential function, we have
\begin{align}\label{vm}
V(m)=(-1)^{m+1} \left(\frac{y}{2\pi}\right)^{2m} \sum_{n=1}^{\infty} \sigma_{-(2m+1)}(n) e^{-\frac{4\pi^2 n}{y}}.
\end{align}
Ramanujan's formula \eqref{zetaodd} can now be obtained by first substituting \eqref{vm} in \eqref{beffina}, then letting $y=2\a$ and $\a\b=\pi^2$, multiplying throughout by $(4\a)^{-m}$ and then simplifying the resulting equation.

 \section{Concluding remarks}\label{cr}
We would like to mention that by the principle of analytic continuation all of our results involving $x$, or $\a$ and $\b$ can be extended to their complex values satisfying Re$(x)>0$, Re$(\a)>0$ and Re$(\b)>0$. 

One of the objectives of this paper was to obtain an extension of the beautiful result of Kanemitsu, Tanigawa and Yoshimoto, namely \eqref{ramseries}, for \emph{any} integer value of $h$. 

When $h$ runs through the values $\frac{N+1}{2}+Nm$, where $N$ is an odd positive integer and $m\in\mathbb{Z}\backslash\{0\}$, we get a new elegant generalization of Ramanujan's famous formula for $\zeta(2m+1)$. The nice thing about this generalization is that it gives a relation between \emph{any} two odd zeta values of the form $\zeta(4\ell+3)$. To see this, let $N=2j+1, j\geq 0$ and $m=2\ell+1, l \geq 0$, then Theorem \ref{zetagen} gives a relation between $\zeta(4\ell+3)$ and $\zeta(4(2j\ell+j+\ell)+3)$. If we now let $\ell=0$, then we get a relation between $\zeta(3)$ and $\zeta(4j+3)$. Thus any two distinct values of $j$, for example, $j_1$ and $j_2$, by way of the relations that $\zeta(4j_1+3)$ and $\zeta(4j_2+3)$ have with $\zeta(3)$, imply a relation between $\zeta(4j_1+3)$ and $\zeta(4j_2+3)$. 

However, Theorem \ref{zetagen} gives a similar relation between only some odd zeta values of the form $\zeta(4\ell+1)$, but not between \emph{any} two such odd zeta values. For example, one does not get a relation between $\zeta(5)$ and $\zeta(33)$ from our Theorem \ref{zetagen}. To see this, note that if $N=2j+1, j\geq 1$ and $m=2\ell, \ell \geq 1$, and there exists a relation between $\zeta(2m+1)$ and $\zeta(2Nm+1)$ governed by Theorem \ref{zetagen}, that is, a relation between $\zeta(4\ell+1)$ and $\zeta(4\ell(2j+1)+1)$, then in the special case when $\ell=1$ and $4\ell(2j+1)+1=33$, it would imply $2j+1=8$, which is obviously false for any positive integer $j$. Also, two odd zeta values, one of the form $\zeta(4\ell+1)$ and other of the form $\zeta(4\ell+3)$ do not satisfy the relation in Theorem \ref{zetagen}.

Table 1 below lists some odd zeta values obeying the relation in Theorem \ref{zetagen}.

When $h$ runs through the values $\frac{N}{2}+Nm$, where $N$ is an even positive integer and $m\in\mathbb{Z}$, we obtain a result complementary to Theorem \ref{zetagen}, namely Theorem \ref{neven}. This result covers cases not covered in \cite{ktyhr} and \cite{ktyacta}. For example, if we let $N=2$ and $m=-1$ in Theorem \ref{neven}, we obtain a transformation of the generalized Lambert series $\sum_{n=1}^{\infty}\displaystyle\tfrac{n^{4}}{e^{n^2x}-1}$ which cannot be obtained from the results in \cite{ktyhr} and \cite{ktyacta}. We would also like to emphasize that Theorem \ref{ktycomp} allows us to obtain transformations beyond those given by Theorems \ref{zetagen} and \ref{neven} since $h$ is permitted to take any integral value other than the values $\frac{N+1}{2}+Nm$ and $\frac{N}{2}+Nm$ it takes for Theorems \ref{zetagen} and \ref{neven} respectively.

Corollaries \ref{transc}, \ref{gammatrans} and \ref{eventrans} suggest that the key to studying questions on transcendence of $\gamma$ and odd zeta values might lie in the study of the generalized Lambert series occurring in these results for $N>1$. Note that Corollary \ref{gammatrans} gives information about Euler's constant $\gamma$ only when $N>1$ since for $N=1$, the expression involving $\gamma$ in Theorem \ref{zetagenm0} vanishes. Similarly when $m$ is an even natural number, the result in Corollary \ref{transc}, as well as that in the second part of Corollary \ref{rtype}, holds only when $N$ is an odd integer greater than $1$ since the $N=1$ case of Theorem \ref{zetagen}, which is nothing but \eqref{zetaodd}, simply reduces to $\sum_{j=0}^{m+1}\frac{(-1)^jB_{2j}B_{2m+2-2j}}{(2j)!(2m+2-2j)!}=0$ 
when $m$ is an even natural number.

Ramanujan's formula \eqref{zetaodd} can be interpreted \cite{gmr}, \cite{berndtstraubzeta} as the formula encoding the fundamental transformation properties of Eisenstein series of level $1$ and their Eichler integrals. In light of this, it is important to see what is encoded by Theorem \ref{zetagen}, a generalization of Ramanujan's formula. Moreover, Theorem \ref{neven} being a natural complement of Theorem \ref{zetagen}, suggests a similar study when $N$ is even.

Our results in Theorems \ref{zetagen} and \ref{neven}, and more generally in Theorem \ref{ktycomp}, can be generalized in the context of Hurwitz zeta function and Dirichlet $L$-functions. In fact, as mentioned before, Kanemitsu, Tanigawa and Yoshimoto have already obtained a generalization of Theorem \ref{ktycomp} for Hurwitz zeta function \cite{ktyacta}, and later for Dirichlet $L$-function \cite{ktypic}, but they only consider the case when $N$ is even and $h\geq N/2$. Thus, these results could be further generalized when $N$ is any positive integer and $h$ is any integer. 

Lastly we would like to mention that the behavior of the Lambert series $\displaystyle\sum_{n=1}^{\infty}\frac{n^{N-(2h+1)}}{e^{n^{N}x}-1}$ seems to be quite different from the one studied here. This will be studied in a forthcoming paper \cite{dixitmaji2}.

\begin{table}[!htb]
\caption{\bf Odd zeta values related by Theorem \ref{zetagen}}
\begin{tabular}{|c|c|c|c|}
\hline
$m$ & $N$ & $\zeta(2m+1)$ & $\zeta(2Nm +1)$ \\ 
\hline
1 & 1 & $\zeta(3)$ & $\zeta(3)$  \\
\hline
1 & 3 & $\zeta(3)$ & $\zeta(7)$ \\
\hline
1 & 5 & $\zeta(3)$ & $\zeta(11)$ \\
\hline
1 & 7 & $\zeta(3)$ & $\zeta(15)$ \\
\hline
\end{tabular}
\quad\hspace{1cm}
\begin{tabular}{|c|c|c|c|}
\hline
$m$ & $N$ & $\zeta(2m+1)$ & $\zeta(2Nm +1)$ \\ 
\hline
2 & 3 & $\zeta(5)$ & $\zeta(13)$ \\
\hline
2 & 5 & $\zeta(5)$ & $\zeta(21)$ \\
\hline
2 & 7 & $\zeta(5)$ & $\zeta(29)$ \\
\hline
2 & 9 & $\zeta(5)$ & $\zeta(37)$ \\
\hline
\end{tabular}
\end{table}

\begin{table}[!htb]
\begin{tabular}{|c|c|c|c|}
\hline
$m$ & $N$ & $\zeta(2m+1)$ & $\zeta(2Nm +1)$ \\ 
\hline
3 & 1 & $\zeta(7)$ & $\zeta(7)$ \\
\hline
3 & 3 & $\zeta(7)$ & $\zeta(19)$ \\
\hline
3 & 5 & $\zeta(7)$ & $\zeta(31)$ \\
\hline
3 & 7 & $\zeta(7)$ & $\zeta(43)$ \\
\hline
\end{tabular}
\quad\hspace{1cm}
\begin{tabular}{|c|c|c|c|}
\hline
$m$ & $N$ & $\zeta(2m+1)$ & $\zeta(2Nm +1)$ \\ 
\hline
4 & 3 & $\zeta(9)$ & $\zeta(25)$ \\
\hline
4 & 5 & $\zeta(9)$ & $\zeta(41)$ \\
\hline
4 & 7 & $\zeta(9)$ & $\zeta(57)$ \\
\hline
4 & 9 & $\zeta(9)$ & $\zeta(73)$ \\
\hline
\end{tabular}
\end{table}
\vspace{17mm}
\begin{center}
\textbf{Acknowledgements}
\end{center}
The authors thank Professor Bruce C. Berndt for his encouragement and for giving important suggestions which improved the exposition. Part of this work was written while the first author was visiting Harish-Chandra Research Institute during June 27-July 4, 2017. He sincerely thanks the institute for the hospitality and support. The first author's research is supported by the SERB-DST grant RES/SERB/MA/P0213/1617/0021 and he sincerely thanks SERB-DST for the support. The second author is a postdoctoral fellow at Indian Institute of Technology Gandhinagar.

\end{document}